\documentclass[12pt, reqno]{amsart}
\usepackage{amsmath, amsthm, amscd, amsfonts, amssymb, mathrsfs, graphicx, color}
\usepackage[bookmarksnumbered, colorlinks, plainpages]{hyperref}
\hypersetup{colorlinks=true,linkcolor=red, anchorcolor=green, citecolor=cyan, urlcolor=red, filecolor=magenta, pdftoolbar=true}

\textheight 23truecm \textwidth 15truecm
\setlength{\oddsidemargin}{0.25in}\setlength{\evensidemargin}{0.25in}

\setlength{\topmargin}{-.5cm}

\newtheorem{theorem}{Theorem}[section]
\newtheorem{lemma}[theorem]{Lemma}
\newtheorem{proposition}[theorem]{Proposition}
\newtheorem{corollary}[theorem]{Corollary}
\theoremstyle{definition}

\theoremstyle{remark}
\newtheorem{remark}[theorem]{Remark}
\numberwithin{equation}{section}

\begin{document}
\title[Norm inequalities related to the Heron and Heinz means]{Norm inequalities related to the Heron and Heinz means}

\author[Y. Kapil, C. Conde, M.S. Moslehian, M. Singh, M. Sababheh]{Yogesh Kapil$^1$, Cristian Conde$^2$, Mohammad Sal Moslehian$^3$, Mandeep Singh$^1$ and Mohammad Sababheh$^4$}

\address{$^1$ Department of Mathematics, Sant Longowal Institute of Engineering and
Technology, Longowal-148106, Punjab, India}
\email{yogesh\_kapill@yahoo.com} \email{msrawla@yahoo.com}

\address{$^2$Instituto de Ciencias, Universidad Nacional de Gral. Sarmiento, J.
M. Gutierrez 1150, (B1613GSX) Los Polvorines and Instituto Argentino de Matem\'atica ``Alberto P. Calder\'on", Saavedra 15 3º piso, (C1083ACA) Buenos Aires, Argentina}
\email{cconde@ungs.edu.ar}

\address{$^3$Department of Pure Mathematics, Center of Excellence in
Analysis on Algebraic Structures (CEAAS), Ferdowsi University of
Mashhad, P. O. Box 1159, Mashhad 91775, Iran}
\email{moslehian@um.ac.ir, moslehian@member.ams.org}

\address{$^4$ Department of mathematics, University of Sharjah, Sharjah, UAE.\newline Department of basic Sciences, Princess Sumaya University for Technology, Amman 11941, Jordan} \email{sababheh@psut.edu.jo, sababheh@yahoo.com}

\keywords{ Norm inequality; Unitarily invariant norm; Operator mean; Heinz inequality; Normalized Jensen functional}
\subjclass[2010]{ 47A30, 47A63, 47A64, 15A60.}

\maketitle

\begin{abstract}
In this article, we present several inequalities treating operator means and the Cauchy-Schwarz inequality. In particular, we present some new comparisons between operator Heron and Heinz means, several generalizations of the difference version of the Heinz means and further refinements of the Cauchy-Schwarz inequality. The techniques used to accomplish these results include convexity and L\"{o}wner matrices.
\end{abstract}

\section{Introduction}

There are different families of means that interpolate between the arithmetic  and  geometric means. For example, the Heron and Heinz means,  defined respectively by
\begin{eqnarray*}
F_{\nu}(a,b)=(1-\nu)\sqrt{ab}+\nu \frac{a+b}{2} \qquad {\rm and} \qquad H_{\nu}(a,b)= \frac{a^{1-\nu}b^{\nu}+a^{\nu}b^{1-\nu}}{2},
\end{eqnarray*}
for $a,b\geq 0$ and $\nu\in[0,1]$. It is easy to see that $F$ is an increasing function and $H$ is a symmetric and convex function in $\nu$ on $[0,1].$ Hence,
\begin{eqnarray}\label{HH}
\sqrt{ab}\leq F_{\nu}(a,b)\leq \frac{a+b}{2}\qquad {\rm and} \qquad \sqrt{ab}\leq H_{\nu}(a,b)\leq \frac{a+b}{2}.
\end{eqnarray}

Recall that the arithmetic--geometric mean inequality $\sqrt{ab}\leq \frac{a+b}{2}$ can be expressed by using the Heron and Heinz means as follows:
\begin{eqnarray*}
F_0(a,b)=H_{1/2}(a,b)\leq F_1(a,b).
\end{eqnarray*}

In \cite{Bh}, Bhatia compared these families of means by showing that
\begin{eqnarray}\label{bhatia}
H_{\nu}(a,b)\leq F_{(2\nu-1)^2}(a,b),
\end{eqnarray}
for all $\nu\in [0,1]$. One goal of this article is to present a new comparison between $H_{\nu}$ and $F_{\nu}$, by means of the Kantorovich constant. More precisely, we will show that $$H_{\nu}(a,b)\leq \left(\frac{a+b}{2\sqrt{ab}}\right)^{1-\nu}F_{\nu}(a,b), 0\leq \nu\leq 1.$$
In the sequel, we set some basic preliminary backgrounds that will be needed throughout the paper.

Let $\mathbb{B}(\mathscr{H})$ denote the $C^*$-algebra of all bounded linear operators acting on a separable complex Hilbert space
$( \mathscr{H},\langle \cdot,\cdot\rangle).$ The cone of positive operators is denoted by $\mathbb{B}(\mathscr{H})_{+}$. Let $\mathbb{K}(\mathscr{H})$ denote the ideal of compact operators in $\mathbb{B}(\mathscr{H})$. For any compact operator $A\in \mathbb{K}(\mathscr{H})$,
let $s_{1}(A), s_{2}(A),\cdots$ be the eigenvalues of $|A|= (A^*A)^{1\over{2}}$ arranged in decreasing order and repeated according to
multiplicity. If $A \in \mathcal{M}_n$ (the algebra of all $n\times n$ matrices over $\mathbb{C}$), we take
$s_k(A)=0$ for $k>n$. We denote by $\mathbb{B}(\mathscr{H})_{+}$ (resp., $\mathcal{M}_n^{+}$) the cone of positive operators (resp., positive definite matrices), while $\mathbb{B}(\mathscr{H})_{++}$ (resp., $\mathcal{M}_n^{++}$) stands for the set of invertible operators in $\mathbb{B}({\mathscr{H}})_{+}$. A unitarily invariant norm in $\mathbb{K}(\mathscr{H})$ is a map
$|||\cdot|||: \mathbb{K}(\mathscr{H}) \to [0,\infty]$
given by $|||A|||=g(s(A))$, $A \in \mathbb{K}(\mathscr{H})$, where $g$ is a symmetric gauge function; cf. \cite{GK}.
The set $\mathcal{I}=\{A \in \mathbb{K}(\mathscr{H}):|||A||| < \infty \}$
is a (two-sided) ideal of $\mathbb{B}(\mathscr{H})$. The operator norm $\|\cdot\|$ and the Schatten $p$-norms
$\|A\|_p=\left(\sum_{j} s_j^p(A)\right)^{1/p}$ for $p \geq 1$ are significant examples of the unitarily invariant norms. For notational convenience, we shall denote $(\mathcal{I}, |||.|||)$ by $\mathcal{I}$.

The inequalities in \eqref{HH} have some possible operator versions as follows.\\
If $A, B \in \mathbb{B}(\mathscr{H})_+,$ $X\in \mathcal{I}$ and $\nu \in [0,1]$, then
\begin{eqnarray*}
2|||A^{1/2}XB^{1/2}|||\leq |||A^{\nu}XB^{1-\nu}+A^{1-v}XB^{\nu}|||\leq |||AX+XB|||.
\end{eqnarray*}

Recently, Kapil and Singh \cite[Theorems 3.7 and 3.8]{KS1} proved that if $A, B \in \mathbb{B}(\mathscr{H})_+,$ $X\in \mathcal{I}$ then
\begin{eqnarray}\label{kaursingh}
\frac 12 |||A^{\nu}XB^{1-\nu}+A^{1-v}XB^{\nu}|||\leq \left|\left|\left| (1-\alpha)A^{\frac 12}XB^{\frac 12}+\alpha \left(\frac{AX+XB}{2}\right)\right|\right|\right|,
\end{eqnarray}
for $1/4\leq \nu \leq 3/4$ and $1/2\leq \alpha <\infty$. A comparison between the geometric and Heron means is a particular case of \eqref{kaursingh}, when $\nu=1/2$, i.e.,
\begin{eqnarray}\label{kaursingh2}
 |||A^{1/2}XB^{1/2}|||\leq \left|\left|\left| (1-\alpha)A^{\frac 12}XB^{\frac 12}+\alpha \left(\frac{AX+XB}{2}\right)\right|\right|\right|.
\end{eqnarray}
 Further in \cite{KS1}, authors proved some generalizations of the difference version of Heinz inequality, given by
 \begin{eqnarray}\label{ks2}
 |||A^{\nu}XB^{1-\nu}-A^{1-\nu}XB^{\nu}|||\leq |2\nu-1|\;|||AX-XB|||,\;\;\;\;\;\hspace{1cm} (0\leq \nu\leq 1).
\end{eqnarray}
 Moreover, it is proved that $|||A^{\nu}XB^{1-\nu}-A^{1-\nu}XB^{\nu}|||$ is a convex function of $\nu$; see \cite[Remark 3.12]{KS1}. These results have also been proved in matrix version by several authors; see \cite{KS,Zh} for example.

The aim of this paper is to obtain refinements of inequalities (\ref{kaursingh}) and (\ref{kaursingh2}). Some refinements in difference version of Heinz inequality are also obtained with some generalizations. Then we utilize the upper and lower bounds for the normalized Jensen functional (see Theorem \ref{Mi}) on the convexity of several functions observed in this study. This leads to more refinements of norm inequalities. At the end, we utilize the Jensen functional once more to discuss some refinements of the Cauchy--Schwarz inequality. We refer the reader to \cite{AFA} for recent developments of the Cauchy--Schwarz inequality.

\section{Background}

Throughout this note, we denote by $J$ a closed interval of the real line and $f$ is assumed to be a continuous real-valued function defined on $J$.
In 1906, J. Jensen introduced the concept of {\it Jensen convex} (or {\it midpoint convex}) {\it function}, characterized by
\begin{eqnarray}\label{j}
f\left(\frac{x+y}{2}\right) \leq \frac{f(x)+f(y)}{2},\
\end{eqnarray}
for all $x,y\in J.$ That is, these are functions that behave in a particular way under the action of the arithmetic mean. Note that the arithmetic--geometric mean inequality is a particular case of (\ref{j}) by considering $f(x)=e^x$. In the context of continuity, midpoint convexity gives rise to {\it convexity}. That is,
\begin{eqnarray*}f(\lambda x+ (1-\lambda)y) \leq \lambda f(x)+(1-\lambda)f(y),
\end{eqnarray*}
for all $x,y\in J$ and $0\leq\lambda\leq1.$

It is well known that every convex function on a closed interval can be modified at the
endpoints to become convex and continuous. An immediate consequence of this
remark is the integrability of $f$. The integral of $f$ can then be estimated by
\begin{eqnarray}\label{s}
f\left(\frac{x+y}{2}\right) \leq \frac{1}{y-x}\int_x^yf(t)\,dt \leq \frac{f(x)+f(y)}{2}\,.
\end{eqnarray}

This fundamental inequality, which was first published by Hermite in 1883 and independently proved in 1893 by Hadamard, is well known as the Hermite--Hadamard inequality. It is obvious that \eqref{s} is an interpolating inequality for \eqref{j}.

There is a growing literature considering several interesting generalizations, refinements and interpolations in various frameworks.
We would like to refer the reader to \cite{Con, Dr} and references therein for more information. Some mathematicians have obtained several refinements of the operator inequalities as consequences of the Hermite--Hadamard inequality, for example \cite{KMSC, K, MOS}.

If $f$ is a convex function on $J$, then the well-known Jensen's inequality asserts that
\begin{eqnarray*}
0\leq \sum_1^n p_i f(x_i)-f\left(\sum_1^n p_i x_i\right):=\mathcal{J}(f,{\bold x}, {\bold p}),
\end{eqnarray*}
where ${\bold x}=(x_1,\cdots, x_n)\in J^n$ and ${\bold p}=(p_1,\cdots,p_n), p_i\geq 0$ with $\sum p_i=1.$ $\mathcal{J}$ is called the normalized Jensen functional and in recent years, many authors have studied it and have established upper and lower bounds for this functional; see  \cite{Dr} as an example.

\begin{theorem} \cite[Corollary 1]{Dr}\label{Mi}
Let $f$ be a convex function on $J$. Then
 \begin{align}\label{uljensen}
 2\lambda_{min} &\left(\frac{f(x_1)+f(x_2)}{2}-f\left(\frac{x_1+x_2}{2}\right)\right)\leq \lambda f(x_1)+(1-\lambda)f(x_2)-
f(\lambda x_1+(1-\lambda)x_2)
 \nonumber \\ &\leq
 2\lambda_{max}\left(\frac{f(x_1)+f(x_2)}{2}-f\left(\frac{x_1+x_2}{2}\right)\right), \
 \end{align}
 where $0\leq \lambda \leq 1$, $\lambda_{min}=\min\{\lambda,1-\lambda\}$, $\lambda_{max}=\max\{\lambda,1-\lambda\}$ and $x_1, x_2\in J.$
 \end{theorem}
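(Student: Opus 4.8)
The plan is to reduce the double inequality to two elementary applications of the convexity of $f$. First, observe that all the quantities appearing in \eqref{uljensen} are invariant under the simultaneous exchange $x_1\leftrightarrow x_2$, $\lambda\leftrightarrow 1-\lambda$: the middle term $\lambda f(x_1)+(1-\lambda)f(x_2)-f(\lambda x_1+(1-\lambda)x_2)$ is unchanged, the ``midpoint defect'' $\frac{f(x_1)+f(x_2)}{2}-f\big(\frac{x_1+x_2}{2}\big)$ is symmetric in $x_1,x_2$, and $\lambda_{min},\lambda_{max}$ are unaffected. Hence it suffices to treat the case $\lambda\le\tfrac12$, so that $\lambda_{min}=\lambda$ and $\lambda_{max}=1-\lambda$. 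Write $m=\frac{x_1+x_2}{2}$ and $z=\lambda x_1+(1-\lambda)x_2$; since $0\le\lambda\le\frac12$, the point $z$ lies on the segment joining $m$ and $x_2$, a fact which makes the two collinearity relations below available.

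For the upper bound, I would express the midpoint $m$ as a convex combination of $x_1$ and $z$. A direct computation gives
\[
m=\frac{1-2\lambda}{2(1-\lambda)}\,x_1+\frac{1}{2(1-\lambda)}\,z,
\]
and the two coefficients are nonnegative and sum to $1$ precisely because $\lambda\le\frac12$. Applying convexity of $f$ to this combination and multiplying through by $2(1-\lambda)>0$ yields $2(1-\lambda)f(m)\le(1-2\lambda)f(x_1)+f(z)$, which rearranges exactly into $\lambda f(x_1)+(1-\lambda)f(x_2)-f(z)\le 2(1-\lambda)\big(\frac{f(x_1)+f(x_2)}{2}-f(m)\big)$, the desired right-hand estimate.

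For the lower bound, I would instead write $z$ itself as a convex combination of $x_2$ and $m$, namely $z=(1-2\lambda)\,x_2+2\lambda\,m$, where once more $1-2\lambda\ge0$ by the assumption $\lambda\le\frac12$. Convexity of $f$ gives $f(z)\le(1-2\lambda)f(x_2)+2\lambda f(m)$, and a rearrangement turns this into $2\lambda\big(\frac{f(x_1)+f(x_2)}{2}-f(m)\big)\le\lambda f(x_1)+(1-\lambda)f(x_2)-f(z)$, the left-hand estimate. I do not expect a genuine obstacle here beyond keeping the algebra straight; the one point requiring care is checking that the weights in the two convex combinations lie in $[0,1]$, which is exactly where the normalization $\lambda\le\frac12$ (equivalently, the presence of $\lambda_{min}$ and $\lambda_{max}$ rather than $\lambda$ and $1-\lambda$ in the general-position statement) enters.
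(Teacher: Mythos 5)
Your argument is correct: the symmetry reduction to $\lambda\le\tfrac12$ is legitimate (every term in \eqref{uljensen} is invariant under $x_1\leftrightarrow x_2$, $\lambda\leftrightarrow 1-\lambda$), both collinearity identities
$m=\tfrac{1-2\lambda}{2(1-\lambda)}x_1+\tfrac{1}{2(1-\lambda)}z$ and $z=(1-2\lambda)x_2+2\lambda m$ check out, the weights lie in $[0,1]$ exactly because $0\le\lambda\le\tfrac12$, and the two rearrangements land precisely on the claimed bounds with $\lambda_{max}=1-\lambda$ and $\lambda_{min}=\lambda$. Note, however, that the paper gives no proof of this statement at all: it is quoted verbatim as Corollary~1 of Dragomir's article on the normalized Jensen functional. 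So the comparison is really with that source, where the result is obtained as the two-point, equal-weight specialization of a general $n$-point theorem: for weight vectors $\mathbf{p},\mathbf{q}$ one has $\min_i(p_i/q_i)\,\mathcal{J}(f,\mathbf{x},\mathbf{q})\le\mathcal{J}(f,\mathbf{x},\mathbf{p})\le\max_i(p_i/q_i)\,\mathcal{J}(f,\mathbf{x},\mathbf{q})$, and taking $n=2$, $\mathbf{q}=(\tfrac12,\tfrac12)$ produces the factors $2\lambda_{min}$ and $2\lambda_{max}$. Your route trades that generality for a completely elementary, self-contained derivation using only two applications of convexity to collinear points, which is arguably preferable if one only ever needs the two-point case, as this paper does.
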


First note that by integrating \eqref{uljensen} over $[0,1]$ we obtain
\begin{eqnarray}\label{integral}
\frac 12\left(\frac{f(x_1)+f(x_2)}{2}-f\left(\frac{x_1+x_2}{2}\right)\right)&\leq &\frac 12 f(x_1)+ \frac 12f(x_2)- \frac{1}{x_2-x_1}\int_{x_1}^{x_2}f(x) dx \nonumber \\ &\leq&
 \frac 32 \left(\frac{f(x_1)+f(x_2)}{2}-f\left(\frac{x_1+x_2}{2}\right)\right).
\end{eqnarray}

That is, we have upper and lower bounds for the difference between the terms that appear in the left of \eqref{s}.

\section{Norm inequalities involving operator version of Heron and Heinz means}

For the sake of simplicity, we denote

\begin{eqnarray*}
F(\nu)=\frac {1}{2} |||A^{\nu}XB^{1-\nu}+A^{1-v}XB^{\nu}|||, \;\;\;\;\;\; K(\nu)=|||A^{\nu}XB^{1-\nu}-A^{1-v}XB^{\nu}|||,
\end{eqnarray*}

and
$$
G(\nu)= \left|\left|\left| (1-\nu)A^{\frac 12}XB^{\frac 12}+\nu \left(\frac{AX+XB}{2}\right)\right|\right|\right|\,,
$$
for $A, B \in \mathbb{B}(\mathscr{H})_+,$ $X\in \mathcal{I}$ and $\nu \in[0,1].$
We remind the reader of a result in \cite[Remarks 3.2 and 3.12]{KS1} that the functions $F(\nu)$ and $K(\nu)$ are convex on $[0, 1]$ and attain their minimum at $\nu=1/2.$

As we have mentioned at the introduction, the authors of \cite{KS1} have obtained a complete interpolation and comparison of operator inequalities for Heron and Heinz means. More precisely, they proved that $G(\nu)\leq G(1/2)$ for $\nu\in [0,1/2], G(\nu)$ is an increasing function for $\nu \in [1/2, \infty)$ and
\begin{eqnarray}\label{ksreducida}
F(\nu)\leq G(\alpha),
\end{eqnarray}
for $\nu \in [1/4,3/4]$ and $\alpha \geq 1/2.$ \\

\begin{theorem}\label{t1} Let $1/4\leq \nu\leq 3/4$ and $\alpha \in [1/2, \infty)$. Then
\begin{eqnarray}\label{ays1}
F(\nu)\leq (4r_0-1)F(1/2)+ 2(1-2r_0)G(\alpha)\leq G(\alpha),
\end{eqnarray}
and
\begin{eqnarray}\label{ays1b}
F(1/2)+2\left(2\int_{1/4}^{3/4} F(\nu) d\nu - F(1/2)\right)\leq G(\alpha).
\end{eqnarray}
where $r_0(\nu)=\min\{\nu, 1-\nu\}.$
\end{theorem}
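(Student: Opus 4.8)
The plan is to apply Theorem~\ref{Mi} to the convex function $F(\nu)$ on the interval $[1/4,3/4]$, together with the known comparison $F(1/2)\le G(\alpha)$ coming from \eqref{ksreducida} (valid for $\alpha\ge 1/2$). First I would rewrite an arbitrary $\nu\in[1/4,3/4]$ as a convex combination of the endpoints $x_1=1/4$ and $x_2=3/4$, namely $\nu=\lambda\cdot\frac14+(1-\lambda)\cdot\frac34$ for a suitable $\lambda\in[0,1]$; solving gives $\lambda=\frac32-2\nu$ and $1-\lambda=2\nu-\frac12$, so that $\lambda_{\min}=\min\{\lambda,1-\lambda\}$ can be expressed in terms of $r_0(\nu)=\min\{\nu,1-\nu\}$. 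A short case check (splitting at $\nu=1/2$) should show $\lambda_{\min}=2r_0-\tfrac12$, equivalently $2\lambda_{\min}=4r_0-1$. Since $F$ is convex on $[0,1]$ (hence on $[1/4,3/4]$) by \cite[Remark 3.2]{KS1}, the upper bound in \eqref{uljensen} applied with these data yields
\begin{eqnarray*}
F(\nu)\le \lambda F(1/4)+(1-\lambda)F(3/4) \le 2\lambda_{\max}\Big(\tfrac{F(1/4)+F(3/4)}{2}-F(1/2)\Big)+F(1/2),
\end{eqnarray*}
but it is cleaner to use the lower bound of \eqref{uljensen} in the form $\lambda f(x_1)+(1-\lambda)f(x_2)\ge f(\lambda x_1+(1-\lambda)x_2)+2\lambda_{\min}\big(\tfrac{f(x_1)+f(x_2)}{2}-f(\tfrac{x_1+x_2}{2})\big)$ read backwards; I will instead directly bound $F(\nu)$ above.

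The cleaner route: from the upper estimate of Theorem~\ref{Mi} with $x_1=1/4,\ x_2=3/4$ (so $\tfrac{x_1+x_2}{2}=1/2$), we get $F(\nu)\le \lambda F(1/4)+(1-\lambda)F(3/4)$ is the trivial convexity bound; what Theorem~\ref{Mi} adds is
\begin{eqnarray*}
F(\nu)\le F(1/2)+2\lambda_{\max}\left(\frac{F(1/4)+F(3/4)}{2}-F(1/2)\right).
\end{eqnarray*}
Here I would substitute the symmetry $F(1/4)=F(3/4)$ (since $F(\nu)=F(1-\nu)$) so that $\tfrac{F(1/4)+F(3/4)}{2}=F(1/4)$, giving $F(\nu)\le (1-2\lambda_{\max})F(1/2)+2\lambda_{\max}F(1/4)$. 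Now use $\lambda_{\max}=1-\lambda_{\min}$, so $2\lambda_{\max}=2-2\lambda_{\min}=2-(4r_0-1)=3-4r_0$ and $1-2\lambda_{\max}=4r_0-3$; hmm, this does not immediately match the stated coefficients $4r_0-1$ and $2(1-2r_0)$, so I expect the intended argument uses the \emph{lower} bound of \eqref{uljensen} rearranged, or applies Theorem~\ref{Mi} to express $F(1/2)$ (the midpoint value) as lying below the chord — reconciling the coefficient bookkeeping is the step that needs care. Concretely, write $2\int_{1/4}^{3/4}F(\nu)\,d\nu$ as the normalized integral $\tfrac{1}{x_2-x_1}\int_{x_1}^{x_2}F$, apply \eqref{integral} with $x_1=1/4,x_2=3/4$ to sandwich it between $F(1/2)$ and the chord value $F(1/4)$, and then the final inequality $\le G(\alpha)$ follows because the chord value $F(1/4)=F(3/4)\le G(\alpha)$ by \eqref{ksreducida} applied at $\nu=1/4$ and $\nu=3/4$ (both in $[1/4,3/4]$), while $F(1/2)\le G(\alpha)$ as well, so any convex combination of these two quantities — which is exactly what the middle term of \eqref{ays1} is, once the coefficients are verified to be nonnegative and to sum to $1$ — is bounded by $G(\alpha)$.

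For \eqref{ays1b}: integrate the inequality \eqref{ays1} (or directly \eqref{integral}) over $\nu\in[1/4,3/4]$, or more simply note that \eqref{ays1b} is precisely \eqref{integral} with $f=F$, $x_1=1/4$, $x_2=3/4$, rearranged, combined with $F(1/4)=F(3/4)\le G(\alpha)$ and $F(1/2)\le G(\alpha)$; the left-hand side of \eqref{ays1b} equals $2\cdot\tfrac{1}{x_2-x_1}\int_{x_1}^{x_2}F - \ldots$ after using symmetry to collapse $\tfrac{F(1/4)+F(3/4)}{2}=F(1/4)$, and then the upper bound in \eqref{integral} shows this quantity is at most a convex combination of $F(1/4)$ and $F(1/2)$, hence at most $G(\alpha)$.

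The main obstacle I anticipate is the coefficient bookkeeping in \eqref{ays1}: getting $\lambda_{\min},\lambda_{\max}$ correctly expressed through $r_0(\nu)$ after the affine reparametrization $\nu\mapsto\lambda=\tfrac32-2\nu$, verifying that $4r_0-1\ge 0$ and $2(1-2r_0)\ge 0$ on $[1/4,3/4]$ so the middle expression really is a convex combination of $F(1/2)$ and $G(\alpha)$, and confirming the two coefficients sum to $1$ (they do: $(4r_0-1)+2(1-2r_0)=1$). Everything else — convexity of $F$, symmetry $F(\nu)=F(1-\nu)$, and $F(\cdot)\le G(\alpha)$ at the relevant points — is quoted directly from \cite{KS1} and \eqref{ksreducida}.
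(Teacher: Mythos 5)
There is a genuine gap at the central step: your derivation of the middle term $(4r_0-1)F(1/2)+2(1-2r_0)G(\alpha)$ does not go through as written, and you acknowledge as much (``this does not immediately match the stated coefficients \dots reconciling the coefficient bookkeeping is the step that needs care'') without resolving it. The trouble is a direction error in applying Theorem \ref{Mi}: the upper estimate in \eqref{uljensen} bounds the Jensen gap $\lambda f(x_1)+(1-\lambda)f(x_2)-f(\nu)$ from above, hence bounds $f(\nu)$ from \emph{below}; to bound $F(\nu)$ from above you must use the \emph{lower} estimate, which gives $F(\nu)\le \lambda F(1/4)+(1-\lambda)F(3/4)-2\lambda_{\min}\bigl(\tfrac{F(1/4)+F(3/4)}{2}-F(1/2)\bigr)$. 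With the symmetry $F(1/4)=F(3/4)$ and your (correct) computation $2\lambda_{\min}=4r_0-1$, this reads $F(\nu)\le(2-4r_0)F(1/4)+(4r_0-1)F(1/2)$, and then $F(1/4)\le G(\alpha)$ from \eqref{ksreducida} yields exactly the stated middle term; since the coefficients are nonnegative and sum to $1$, the inequality $F(1/2)\le G(\alpha)$ finishes \eqref{ays1}. Your version with $2\lambda_{\max}=3-4r_0$ instead produces $(4r_0-3)F(1/2)+(3-4r_0)F(1/4)$, whose first coefficient is negative throughout $[1/4,3/4]$, so it is not a convex combination and does not deliver either inequality in \eqref{ays1}.

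For comparison, the paper avoids Theorem \ref{Mi} entirely here: for $\nu\in[1/4,1/2]$ it writes $\nu=(2-4\nu)\tfrac14+(4\nu-1)\tfrac12$ and applies plain convexity with endpoints $1/4$ and $1/2$, getting $F(\nu)\le(2-4\nu)F(1/4)+(4\nu-1)F(1/2)$ directly, then substitutes $F(1/4)\le G(\alpha)$ and handles $\nu\in[1/2,3/4]$ by the symmetry $\nu\mapsto 1-\nu$. The integral inequality \eqref{ays1b} is then obtained by integrating the resulting bound over the two half-intervals and adding; one checks $\int_{1/4}^{1/2}(4\nu-1)\,d\nu=\int_{1/4}^{1/2}(2-4\nu)\,d\nu=\tfrac18$, which gives $4\int_{1/4}^{3/4}F(\nu)\,d\nu-F(1/2)\le G(\alpha)$, i.e.\ \eqref{ays1b}. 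Your plan of integrating \eqref{ays1} (or of invoking \eqref{integral} with $x_1=1/4$, $x_2=3/4$ together with $F(1/4)\le G(\alpha)$) is viable in principle, but it rests on the unestablished coefficient identity above, so the proposal as it stands does not constitute a proof.
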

\begin{proof}
We first choose $\frac{1}{4} \leq \nu \leq \frac{1}{2}.$ Then using the convexity of $F(\nu)$ (see \cite[Remark 3.2]{KS1}), we obtain
\begin{eqnarray}\label{e1}
F(\nu)=F\left((2-4\nu)\frac{1}{4}+(4\nu-1)\frac{1}{2}\right) \leq (2-4\nu)F(1/4)+(4\nu-1)F(1/2).
\end{eqnarray}
Now using \eqref{ksreducida} with $\nu = 1/4$ in \eqref{e1}, we get
\begin{eqnarray}\label{e2}
 F(\nu)\leq (4\nu-1)F(1/2)+ (2-4\nu)G(\alpha),
\end{eqnarray}
which is equivalent to
\begin{eqnarray*}
F(\nu)\leq (4r_0-1)F(1/2)+ 2(1-2r_0)G(\alpha)\leq G(\alpha),
\end{eqnarray*}
 for $\frac{1}{4} \leq \nu \leq \frac{1}{2}.$\\
If $\frac{1}{2} \leq \nu \leq \frac{3}{4},$ replace $\nu$ by $1-\nu$ in \eqref{e2} to get
\begin{eqnarray}\label{e3} F(\nu)&\leq& (3-4\nu)F(1/2)+ 2(2\nu-1)G(\alpha)\nonumber \\
&=&(4r_0-1)F(1/2)+ 2(1-2r_0)G(\alpha)\nonumber \\
&\leq& G(\alpha),\end{eqnarray}
as $1-\nu=r_0$ in this case. This completes the proof of the first conclusion.
The second conclusion follows by taking the sum of integrals with respect to $\nu$ of \eqref{e2} and \eqref{e3} over $[1/4,1/2]$ and $[1/2,3/4]$, respectively.
\end{proof}

A matrix version of Theorem \ref{t1} has been proved by Ali et al. in \cite[Theorem 2.2 and 2.5]{AYS}.

\begin{theorem}\label{t20}
Let $1/4\leq \nu\leq 3/4$ and $\alpha \in [1/2, \infty)$. Then
\begin{eqnarray*}
(4r_0-1)F(1/2)+ 2(1-2r_0)G(\alpha)\leq 2r_{2} F(1/2)+(1-2r_{2})G(\alpha),
\end{eqnarray*}
where $r_{2}(\nu)=\min\{2\nu-\frac 12, |1-2\nu|, \frac32-2\nu\}$ and $r_0(\nu)=\min\{\nu, 1-\nu\}.$
\end{theorem}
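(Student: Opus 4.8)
The plan is to notice that both sides of the asserted inequality are affine combinations of the two \emph{fixed} quantities $F(1/2)$ and $G(\alpha)$, and that these quantities are comparable. Indeed, putting $\nu=1/2$ in \eqref{ksreducida} yields $F(1/2)\le G(\alpha)$ for every $\alpha\ge 1/2$; one may alternatively deduce this from $F(1/2)=G(0)$ together with the monotonicity properties of $G$ recalled before Theorem \ref{t1} (namely $G(0)\le G(1/2)\le G(\alpha)$). Hence, for any real numbers $\mu_1\ge\mu_2$ the scalar function $\mu\mapsto \mu F(1/2)+(1-\mu)G(\alpha)=G(\alpha)-\mu\big(G(\alpha)-F(1/2)\big)$ is non-increasing in $\mu$, so that
\[
\mu_1 F(1/2)+(1-\mu_1)G(\alpha)\ \le\ \mu_2 F(1/2)+(1-\mu_2)G(\alpha).
\]
Applying this with $\mu_1=4r_0-1$ (the weight of $F(1/2)$ on the left-hand side) and $\mu_2=2r_{2}$ (the weight on the right-hand side), the whole theorem reduces to the purely numerical claim that $4r_0(\nu)-1\ge 2r_{2}(\nu)$ on $[1/4,3/4]$.

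For this numerical step I would simply use the definition of $r_{2}$ as a minimum of three terms, two of which are $2\nu-\tfrac12$ and $\tfrac32-2\nu$; therefore
\[
r_{2}(\nu)\ \le\ \min\Big\{2\nu-\tfrac12,\ \tfrac32-2\nu\Big\}.
\]
On the other hand, since $r_0(\nu)=\min\{\nu,1-\nu\}$ we have $2r_0(\nu)=\min\{2\nu,\,2-2\nu\}$, hence $2r_0(\nu)-\tfrac12=\min\{2\nu-\tfrac12,\ \tfrac32-2\nu\}$. Combining the two displays gives $r_{2}(\nu)\le 2r_0(\nu)-\tfrac12$, i.e. $2r_{2}(\nu)\le 4r_0(\nu)-1$, as required. (For $\nu\in[1/4,3/4]$ one also checks easily that $4r_0-1\in[0,1]$ and $2r_{2}\in[0,1]$, so both combinations are genuinely convex, although this is not needed for the argument.)

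Finally I would assemble the two ingredients. Setting $c:=4r_0-1-2r_{2}\ge 0$, a direct computation shows that the difference of the right-hand and left-hand sides of the claimed inequality equals $c\big(G(\alpha)-F(1/2)\big)$, which is non-negative because $G(\alpha)\ge F(1/2)$. This completes the proof.

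I do not expect a genuine obstacle here: the result is essentially a bookkeeping consequence of $F(1/2)\le G(\alpha)$ and an elementary comparison of weights. The only point needing (minor) care is the identification of $2r_0-\tfrac12$ with $\min\{2\nu-\tfrac12,\ \tfrac32-2\nu\}$ and, relatedly, keeping straight that it is the \emph{weight on $F(1/2)$} that must be the larger one, since $F(1/2)\le G(\alpha)$ makes the combination decreasing in that weight.
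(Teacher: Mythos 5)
Your proof is correct. It rests on the same two ingredients as the paper's: the inequality $F(1/2)\le G(\alpha)$ (which indeed follows from \eqref{ksreducida} at $\nu=1/2$, or from $F(1/2)=G(0)$ and the monotonicity of $G$) and a comparison of the coefficients in the two affine combinations. The difference is in execution: the paper computes $l_1-l_2$ explicitly on the four subintervals $[1/4,3/8]$, $[3/8,1/2]$, $[1/2,5/8]$, $[5/8,3/4]$ and checks the sign in each case, whereas you observe that both sides have weights summing to $1$, so everything reduces to the single scalar inequality $2r_2\le 4r_0-1$, which you prove uniformly via the identity $2r_0-\tfrac12=\min\{2\nu-\tfrac12,\ \tfrac32-2\nu\}$ and the fact that $r_2$ is a minimum over a set containing these two terms. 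Your route avoids the case analysis entirely and makes transparent exactly where the (non-strict) inequality can be an equality, namely when $r_2=2r_0-\tfrac12$, i.e.\ on $[1/4,3/8]\cup[5/8,3/4]$, which matches the paper's computation; the paper's version, in exchange, displays the explicit gap $(8\nu-3)\big(F(1/2)-G(\alpha)\big)$ on the middle subintervals.
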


\begin{proof}
Let $l_1=(4r_0-1)F(1/2)+ 2(1-2r_0)G(\alpha)$ and $l_2=2r_{2} F(1/2)+(1-2r_{2})G(\alpha).$ By a simple calculation, we have
\[ l_1-l_2 = \left\{ \begin{array}{cccc}
 0 & \mbox{if $\nu \in [1/4, 3/8]\cup [5/8, 3/4]$};\\
 (8\nu-3)F(1/2)+ (3-8\nu)G(\alpha) & \mbox{if $\nu \in [3/8, 1/2]$}; \\
 (-8\nu+5)F(1/2)+ (8\nu-5)G(\alpha) & \mbox{if $\nu \in [1/2, 5/8].$}\end{array} \right. \]

So, by the inequality $F(1/2)\leq G(\alpha)$, we  conclude that $l_1-l_2\leq 0.$
\end{proof}

\begin{remark} On combining the results of Theorems \ref{t1} and \ref{t20}, we obtain the following double inequality,
\begin{eqnarray*}
\frac{1}{2}|||A^{\nu}X\hspace{-01cm}&&B^{1-\nu}+A^{1-\nu}XB^{\nu}|||\nonumber\\
&\leq &(4r_{0}-1)|||A^{1/2}XB^{1/2}|||+2(1-2r_{0})|||(1-\alpha)A^{1/2}XB^{1/2}+\alpha\left(\frac{AX+XB}{2}\right)|||\nonumber\\
&\leq & 2r_{2}|||A^{1/2}XB^{1/2}|||+(1-2r_{2})|||(1-\alpha)A^{1/2}XB^{1/2}+\alpha\left(\frac{AX+XB}{2}\right)|||
\end{eqnarray*}
for $A, B \in \mathbb{B}(\mathscr{H})_+,$ $X\in \mathcal{I}.$
This not only refines an inequality proved by Kaur et al. in \cite{KMSC} but also lifts that from a matrix version to an operator one.
\end{remark}

Our next result is a new comparison between the Heron and Heinz means. First, a scalar version will be given.
\begin{proposition}
Let $a,b>0$ and let $0\leq \nu\leq 1.$ Then
\begin{equation}\label{heron_heinz_comp}
H_{\nu}(a,b)\leq \left(\frac{H_1(a,b)}{H_{\frac{1}{2}}(a,b)}\right)^{1-\nu}F_{\nu}(a,b).
\end{equation}
\end{proposition}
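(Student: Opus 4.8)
The plan is to reduce \eqref{heron_heinz_comp} to the elementary bound $H_{\nu}(a,b)\le\frac{a+b}{2}$ already recorded in \eqref{HH}, by observing that the right-hand side of \eqref{heron_heinz_comp} never drops below the arithmetic mean $\frac{a+b}{2}$. To set this up I would abbreviate $g=\sqrt{ab}$ and $A=\frac{a+b}{2}$, so that $0<g\le A$, $\frac{H_{1}(a,b)}{H_{1/2}(a,b)}=\frac{A}{g}$, and $F_{\nu}(a,b)=(1-\nu)g+\nu A$; the claimed inequality then reads $H_{\nu}(a,b)\le\left(\frac{A}{g}\right)^{1-\nu}\bigl((1-\nu)g+\nu A\bigr)$.

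The only substantial step is an application of the weighted arithmetic--geometric mean inequality with weights $1-\nu$ and $\nu$ to the pair $g,A$, giving $(1-\nu)g+\nu A\ge g^{1-\nu}A^{\nu}$. Since $\nu\le1$ and $A\ge g$, the factor $\left(\frac{A}{g}\right)^{1-\nu}$ is well defined and is at least $1$; multiplying the previous inequality by it yields
\[
\left(\frac{A}{g}\right)^{1-\nu}F_{\nu}(a,b)\;\ge\;\left(\frac{A}{g}\right)^{1-\nu}g^{1-\nu}A^{\nu}\;=\;A^{1-\nu}A^{\nu}\;=\;A\;=\;\frac{a+b}{2}.
\]
Combining this with the right-hand inequality $H_{\nu}(a,b)\le\frac{a+b}{2}$ of \eqref{HH} finishes the argument, and tracing back the equality case of the weighted AM--GM step shows that equality in \eqref{heron_heinz_comp} holds precisely when $a=b$ or $\nu\in\{0,1\}$.

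The ``hard part'' here is conceptual rather than computational: one has to notice that the Kantorovich-type prefactor $\left(\frac{a+b}{2\sqrt{ab}}\right)^{1-\nu}$, when multiplied into $F_{\nu}$, is exactly what is needed to lift $F_{0}=\sqrt{ab}$ up past $\frac{a+b}{2}$, after which the proposition is immediate from standard scalar inequalities. No convexity argument or L\"{o}wner-matrix technique is required at this scalar level; those tools would enter only if one wished to promote \eqref{heron_heinz_comp} to an operator or unitarily invariant norm inequality, where $H_{\nu}$, $F_{\nu}$ and the prefactor are replaced by their operator analogues and the monotonicity of the relevant operator functions must be invoked.
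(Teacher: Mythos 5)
Your proof is correct, and it takes a genuinely different route from the paper. The paper proves \eqref{heron_heinz_comp} directly: after normalizing $a=1$ it sets
$f(\nu)=\log(b^{\nu}+b^{1-\nu})-(1-\nu)\log\frac{1+b}{2\sqrt{b}}-\log\bigl(2(1-\nu)\sqrt{b}+\nu(1+b)\bigr)$,
computes $f''\geq 0$, and concludes $f(\nu)\leq\max\{f(0),f(1)\}=0$ by convexity. You instead factor the claim through the arithmetic mean: the weighted AM--GM inequality $(1-\nu)g+\nu A\geq g^{1-\nu}A^{\nu}$ with $g=\sqrt{ab}$, $A=\frac{a+b}{2}$ gives
$\left(\frac{A}{g}\right)^{1-\nu}F_{\nu}(a,b)\geq A$, and then $H_{\nu}(a,b)\leq A$ finishes it. Both steps check out (including the identification $H_1/H_{1/2}=A/g$ and the equality analysis), and your argument is considerably more elementary --- no differentiation or convexity of an ad hoc function is needed. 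What your route additionally reveals, and the paper's does not, is that the right-hand side of \eqref{heron_heinz_comp} never falls below $\frac{a+b}{2}$, so the proposition is in fact a formal consequence of the classical Heinz upper bound $H_{\nu}(a,b)\leq\frac{a+b}{2}$ from \eqref{HH}; the Kantorovich-type prefactor inflates $F_{\nu}$ past the arithmetic mean rather than producing a bound intermediate between $H_{\nu}$ and $\frac{a+b}{2}$. The paper's convexity computation, by contrast, is self-contained and would adapt more readily to variants where the prefactor is perturbed, but for this statement your two-line reduction is the cleaner proof.
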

\begin{proof}
Without loss of generality, we may assume $a=1.$ Then the desired inequality reduces to
\begin{equation}\label{wanted_heron_heinz}
\frac{b^{\nu}+b^{1-\nu}}{2}\leq\left(\frac{1+b}{2\sqrt{b}}\right)^{1-\nu}\left((1-\nu)\sqrt{b}+\nu\frac{1+b}{2}\right).
\end{equation}
To prove this inequality, let
$$f(\nu)=\log(b^{\nu}+b^{1-\nu})-(1-\nu)\log\frac{1+b}{2\sqrt{b}}-\log\left(2(1-\nu)\sqrt{b}+\nu(1+b)\right).$$
Calculus computations show that
$$f''(\nu)=\frac{(-1 + \sqrt{b})^4}{(-2 \sqrt{b} (-1 + \nu) + \nu + b \nu)^2 }+ \frac{ 4 b^{1 + 2 \nu} \log^2 b}{(b + b^{2 \nu})^2}.$$ It is clear that $f''(\nu)\geq 0$ for $0\leq \nu\leq 1.$ Hence, $f$ is convex on $[0,1]$. But then $f(\nu)\leq \max\{f(0),f(1)\}=0.$ Since $f(\nu)\leq 0,$ we have
$$\log(b^{\nu}+b^{1-\nu})\leq(1-\nu)\log\frac{1+b}{2\sqrt{b}}+\log\left(2(1-\nu)\sqrt{b}+\nu(1+b)\right),$$ which is equivalent to \eqref{wanted_heron_heinz}.
\end{proof}
Notice that \eqref{heron_heinz_comp} reads as
$$\frac{a^{\nu}b^{1-\nu}+a^{1-\nu}b^{\nu}}{2}\leq\left(\frac{a+b}{2\sqrt{ab}}\right)^{1-\nu}\left((1-\nu)\sqrt{ab}+\nu\frac{a+b}{2}\right).$$ The factor $\frac{a+b}{2\sqrt{ab}}$ has appeared in recent studies of means refinements. The quantity $\left(\frac{a+b}{2\sqrt{ab}}\right)^2$ has been referred to as the Kantorovich constant. We refer the reader to \cite{Liao_Wu} and its references as a sample of some work treating this constant.

Our next result is a matrix version of \eqref{heron_heinz_comp}.
\begin{corollary}
Let $A,B\in\mathcal{M}_n^{+}, X\in\mathcal{M}_n$ and $0\leq \nu\leq 1.$ If there are two positive numbers $m,M$ such that $m\leq A,B\leq M,$ then
$$\left\|\frac{A^{\nu}XB^{1-\nu}+A^{1-\nu}XB^{\nu}}{2}\right\|_2\leq \left(\frac{m+M}{2\sqrt{mM}}\right)^{1-\nu}\left\|(1-\nu)A^{\frac{1}{2}}XB^{\frac{1}{2}}+\nu\frac{AX+XB}{2}\right\|_2.$$
\end{corollary}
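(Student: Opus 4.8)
The plan is to reduce the matrix inequality to the scalar inequality \eqref{heron_heinz_comp} entrywise, exploiting the fact that the Hilbert--Schmidt norm is unitarily invariant and is the square root of the sum of the squared moduli of the matrix entries. Write the spectral decompositions $A = U\,\mathrm{diag}(\lambda_1,\dots,\lambda_n)\,U^{*}$ and $B = V\,\mathrm{diag}(\mu_1,\dots,\mu_n)\,V^{*}$ with $U,V$ unitary, and put $Y = U^{*}XV = [y_{ij}]$. Since $m\le A,B\le M$, all the $\lambda_i$ and $\mu_j$ lie in $[m,M]$. A direct computation shows that the $(i,j)$ entry of $U^{*}\bigl(A^{\nu}XB^{1-\nu}+A^{1-\nu}XB^{\nu}\bigr)V$ equals $(\lambda_i^{\nu}\mu_j^{1-\nu}+\lambda_i^{1-\nu}\mu_j^{\nu})\,y_{ij}$, while the $(i,j)$ entry of $U^{*}\bigl((1-\nu)A^{1/2}XB^{1/2}+\nu\tfrac{AX+XB}{2}\bigr)V$ equals $\bigl((1-\nu)\sqrt{\lambda_i\mu_j}+\nu\tfrac{\lambda_i+\mu_j}{2}\bigr)y_{ij}$. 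Hence each side of the asserted inequality is the square root of $\sum_{i,j}|y_{ij}|^{2}$ times the square of the corresponding scalar coefficient.

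Next I would apply \eqref{heron_heinz_comp} with $a=\lambda_i$, $b=\mu_j$ to obtain, for every pair $(i,j)$,
$$\frac{\lambda_i^{\nu}\mu_j^{1-\nu}+\lambda_i^{1-\nu}\mu_j^{\nu}}{2}\le\left(\frac{\lambda_i+\mu_j}{2\sqrt{\lambda_i\mu_j}}\right)^{1-\nu}\left((1-\nu)\sqrt{\lambda_i\mu_j}+\nu\frac{\lambda_i+\mu_j}{2}\right).$$
To replace the $(i,j)$-dependent factor by the uniform constant in the statement, I would show that $\dfrac{a+b}{2\sqrt{ab}}=\dfrac12\bigl(\sqrt{a/b}+\sqrt{b/a}\bigr)\le\dfrac{m+M}{2\sqrt{mM}}$ for all $a,b\in[m,M]$: writing $s=\sqrt{a/b}$, which ranges over $[\sqrt{m/M},\sqrt{M/m}]$, the function $\tfrac12(s+s^{-1})$ is convex and so attains its maximum on that interval at an endpoint, both endpoints giving the value $\tfrac{m+M}{2\sqrt{mM}}$. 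Since $1-\nu\ge 0$, raising to the power $1-\nu$ preserves the inequality, and therefore
$$\frac{\lambda_i^{\nu}\mu_j^{1-\nu}+\lambda_i^{1-\nu}\mu_j^{\nu}}{2}\le\left(\frac{m+M}{2\sqrt{mM}}\right)^{1-\nu}\left((1-\nu)\sqrt{\lambda_i\mu_j}+\nu\frac{\lambda_i+\mu_j}{2}\right)$$
for all $i,j$.

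Finally I would multiply this inequality by $|y_{ij}|\ge 0$, square both (nonnegative) sides, and sum over all $i,j$; the left-hand sum is $\bigl\|\tfrac12(A^{\nu}XB^{1-\nu}+A^{1-\nu}XB^{\nu})\bigr\|_2^{2}$ and the right-hand sum is $\bigl(\tfrac{m+M}{2\sqrt{mM}}\bigr)^{2(1-\nu)}\bigl\|(1-\nu)A^{1/2}XB^{1/2}+\nu\tfrac{AX+XB}{2}\bigr\|_2^{2}$, after which taking square roots gives the claim. The only step that takes any real thought is the uniform bound on the Kantorovich-type factor $\tfrac{a+b}{2\sqrt{ab}}$ over the square $[m,M]^{2}$; the rest is the standard diagonalization-and-entrywise-comparison argument, which is available precisely because the norm in question is the Hilbert--Schmidt norm rather than a general unitarily invariant norm.
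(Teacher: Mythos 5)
Your proof is correct and follows essentially the same route as the paper: diagonalize $A$ and $B$, pass to the entrywise (Schur product) form, apply the scalar inequality \eqref{heron_heinz_comp} with $a=\lambda_i$, $b=\mu_j$, and bound the Kantorovich-type factor uniformly using $m\leq\lambda_i,\mu_j\leq M$. The only difference is that you spell out the convexity argument for $\frac{a+b}{2\sqrt{ab}}\leq\frac{m+M}{2\sqrt{mM}}$, which the paper simply asserts; that is a correct and welcome addition.
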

\begin{proof}
Let $A=U{\text{diag}}(\lambda_i)U^*$ and $B=V{\text{diag}}(\mu_j)V^*$ be the spectral decompositions of $A$ and $B$, respectively. Letting $U^*XV=Y,$ we have
\begin{align*}
\frac{A^{\nu}XB^{1-\nu}+A^{1-\nu}XB^{\nu}}{2}&=U\frac{{\text{diag}}(\lambda_i^{\nu})Y{\text{diag}}(\mu_j^{1-\nu})+
{\text{diag}}(\lambda_i^{1-\nu})Y{\text{diag}}(\mu_j^{\nu})}{2}V^*\\
&=U\frac{[\lambda_i^{\nu}\mu_j^{1-\nu}+\lambda_i^{1-\nu}\mu_j^{\nu}]\circ [y_{ij}]}{2}V^*,
\end{align*}
where $\circ$ stands for the Schur product. Since $\|\cdot\|_2$ is unitarily invariant and recalling \eqref{heron_heinz_comp}, we get
\begin{align*}
\left\|\frac{A^{\nu}XB^{1-\nu}+A^{1-\nu}XB^{\nu}}{2}\right\|_2^2&=\sum_{i,j}
\left(\frac{\lambda_i^{\nu}\mu_j^{1-\nu}+\lambda_i^{1-\nu}\mu_j^{\nu}}{2}\right)^2|y_{ij}|^2\\
&\leq \sum_{i,j}\left(\frac{\lambda_i+\mu_j}{2\sqrt{\lambda_i\mu_j}}\right)^{2(1-\nu)}
\left((1-\nu)\lambda_i^{\frac{1}{2}}\mu_j^{\frac{1}{2}}+\nu\frac{\lambda_i+\mu_j}{2}\right)^2|y_{ij}|^2\\
&\leq \left(\frac{m+M}{2\sqrt{mM}}\right)^{2(1-\nu)}\left\|(1-\nu)A^{\frac{1}{2}}XB^{\frac{1}{2}}+\nu\frac{AX+XB}{2}\right\|_2^2,
\end{align*}
where we have used the fact that $m\leq \lambda_i,\mu_j\leq M$ to obtain the last line.
\end{proof}
\section{The difference version of Heinz inequality}
In this section, we still adopt the predefined function $K(\nu)=|||A^{\nu}XB^{1-\nu}-A^{1-\nu}XB^{\nu}|||.$
\begin{theorem}\label{t2}Let $1/4\leq \nu\leq 3/4$. Then
\begin{eqnarray}\label{ays3}
K(\nu)\leq 2(1-2r_0)K(1/4),
\end{eqnarray}
and
\begin{eqnarray}\label{ays3b}
\int\limits_{1/4}^{3/4}K(\nu) d\nu \leq\frac{1}{4}K(1/4) \leq\frac{1}{8} K(1),
\end{eqnarray}
where $r_0(\nu)=\min\{\nu, 1-\nu\}.$
\end{theorem}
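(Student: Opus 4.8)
The plan is to mimic the structure of the proof of Theorem \ref{t1}, exploiting the convexity of $K(\nu)$ on $[0,1]$ together with the facts (recorded in the excerpt from \cite{KS1}) that $K$ attains its minimum at $\nu=1/2$ and that $K(1/4)=K(3/4)$ by the symmetry $K(\nu)=K(1-\nu)$. First I would treat $1/4\le\nu\le1/2$: writing $\nu=(2-4\nu)\tfrac14+(4\nu-1)\tfrac12$ as a convex combination of $\tfrac14$ and $\tfrac12$, convexity of $K$ gives
\begin{eqnarray*}
K(\nu)\le(2-4\nu)K(1/4)+(4\nu-1)K(1/2).
\end{eqnarray*}
Since $K(1/2)\le K(1/4)$ (the minimum is at $1/2$), we may drop the $K(1/2)$ term by replacing its coefficient's companion, or more precisely bound $K(1/2)\le K(1/4)$ only when that does not spoil the inequality; here the cleaner route is to use $K(1/2)\ge0$ is not enough, so instead I note $(4\nu-1)K(1/2)\le(4\nu-1)K(1/4)$ is the wrong direction. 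The correct observation is that on $[1/4,1/2]$ we have $2(1-2r_0)=2(1-2\nu)=$ wait — $r_0=\nu$ here so $2(1-2r_0)=2-4\nu$, and the claimed bound $K(\nu)\le(2-4\nu)K(1/4)$ follows from the convexity estimate \emph{only after} discarding the nonnegative-coefficient term $(4\nu-1)K(1/2)$, which we cannot simply discard. So the real mechanism must be: $K(\nu)\le K(1/2)\,\big((\text{something})\big)$ — let me reconsider.

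The right approach is to use convexity on the interval $[0,1/4]$-reflected data or, better, to combine convexity with the minimum property as follows. On $[1/4,1/2]$, because $\tfrac12$ is the minimizer, $K$ is nonincreasing there (a convex function is monotone on each side of its minimum), hence $K(\nu)\le K(1/4)$; but we need the sharper linear bound $2(1-2r_0)K(1/4)$. For this I would instead write $1/4$ as a convex combination of $\nu$ and $1/2$: since $1/4\le\nu\le1/2$, there is $t\in[0,1]$ with $\tfrac14=t\nu+(1-t)\tfrac12$, namely $t=\frac{1/2-1/4}{1/2-\nu}=\frac{1}{2(1-2\nu)}$ when $\nu<1/2$ — this has $t\ge1$, so that fails too. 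The genuinely working identity is to express $\nu$ in terms of the endpoints $0$ and $1/2$: for $0\le\nu\le1/2$, $K(\nu)\le(1-2\nu)K(0)+2\nu K(1/2)\le(1-2\nu)K(0)$ using $K(1/2)\le K(0)$? No. I think the intended chain is: $K(\nu)=K\!\big((1-2\nu)\cdot 0+2\nu\cdot\tfrac12\big)\le(1-2\nu)K(0)+2\nu K(1/2)$, then since $K(1/2)=0$? That is false in general. Let me instead trust that the paper's earlier results give what is needed and describe the plan at that level.

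The clean plan: for $1/4\le\nu\le1/2$ use convexity to interpolate between $\nu=1/4$ and the minimizer $\nu=1/2$, obtaining a bound whose $K(1/4)$-coefficient is $2-4\nu=2(1-2r_0)$, and absorb the remaining term using $K(1/2)\le K(1/4)$ in the form that the full convex-combination bound is dominated by $2(1-2r_0)K(1/4)$ precisely when one first replaces $K(\nu)$ using the segment from $1/2$ to $1/4$ \emph{extended}; concretely, since $K$ is convex and minimized at $1/2$, for $\nu\in[1/4,1/2]$ the value $K(\nu)$ lies below the chord through $(1/2,K(1/2))$ and $(1/4,K(1/4))$, and that chord at $\nu$ equals $(4\nu-1)K(1/2)+(2-4\nu)K(1/4)\le(2-4\nu)K(1/4)+(4\nu-1)K(1/4)$ is again wrong direction. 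I will therefore present the argument as: apply Theorem~\ref{Mi} (the normalized Jensen functional bounds) to the convex function $K$ with $x_1=0$, $x_2=1/2$ and $\lambda$ chosen so that $\lambda x_1+(1-\lambda)x_2=\nu$, using $K(0)=K(1)\ge K(1/2)$ and $K(1/2)\le K(1/4)$, together with the midpoint term $\tfrac{K(0)+K(1/2)}{2}-K(1/4)$; the reflection $\nu\mapsto1-\nu$ handles $[1/2,3/4]$. For \eqref{ays3b}, integrate \eqref{ays3} over $[1/4,3/4]$: $\int_{1/4}^{3/4}2(1-2r_0)\,d\nu=2\int_{1/4}^{1/2}(2-4\nu)\,d\nu\cdot$-type computation gives the constant $1/4$, and the final bound $\tfrac14 K(1/4)\le\tfrac18 K(1)$ follows from $K(1/4)\le\tfrac12 K(1)$, which is the case $\nu=1/4$ of \eqref{ks2} (with $|2\nu-1|=1/2$) or equivalently convexity of $K$ between $\nu=1/2$ and $\nu=1$ with $K(1/2)\ge0$. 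The main obstacle is getting the coefficient bookkeeping exactly right so that the discarded terms genuinely have the favorable sign; I expect this to hinge on using the minimum property $K(1/2)\le K(\text{anything})$ in the correct place rather than on any new inequality.
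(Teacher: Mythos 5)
Your first convexity step is exactly the paper's: write $\nu=(2-4\nu)\cdot\tfrac14+(4\nu-1)\cdot\tfrac12$ for $\nu\in[1/4,1/2]$ and get $K(\nu)\le(2-4\nu)K(1/4)+(4\nu-1)K(1/2)$. But you then talk yourself out of the one fact that finishes the proof: you write ``then since $K(1/2)=0$? That is false in general.'' It is not false — it is an identity. By definition
\begin{equation*}
K(1/2)=\left|\left|\left|A^{1/2}XB^{1/2}-A^{1/2}XB^{1/2}\right|\right|\right|=|||0|||=0,
\end{equation*}
so the term $(4\nu-1)K(1/2)$ vanishes outright and $K(\nu)\le(2-4\nu)K(1/4)=2(1-2r_0)K(1/4)$ on $[1/4,1/2]$; the case $[1/2,3/4]$ follows by the symmetry $K(\nu)=K(1-\nu)$. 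This is precisely the paper's argument. Everything in your proposal after that point — the attempts to use monotonicity, reversed convex combinations with $t\ge1$, and the appeal to Theorem \ref{Mi} with $x_1=0$, $x_2=1/2$ — is an unnecessary detour caused by missing that $K$ is not merely \emph{minimized} at $1/2$ but is identically zero there. As written, your argument for \eqref{ays3} does not close.

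Your treatment of \eqref{ays3b} is fine in outline: integrating \eqref{ays3} gives $\int_{1/4}^{3/4}2(1-2r_0)\,d\nu=1/4$, and $K(1/4)\le\tfrac12 K(1)$ is the case $\nu=1/4$ of \eqref{ks2} since $K(1)=|||AX-XB|||$. (Your alternative justification ``convexity of $K$ between $\nu=1/2$ and $\nu=1$ with $K(1/2)\ge0$'' is again the wrong sign — what you need there is $K(1/2)=0$, via $K(1/4)\le\tfrac12K(0)+\tfrac12K(1/2)=\tfrac12K(1)$ using $K(0)=K(1)$.) So the second claim would be salvageable once the first is fixed, but the proposal as it stands has a genuine gap in the main inequality.
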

\begin{proof}We first prove the result for $1/4\leq \nu \leq 1/2.$ By a simple calculation, we obtain $r_0(\nu) \in [1/4,1/2]$ and
$\nu=\frac{2(1-2r_{0})}{4}+\frac{4r_{0}-1}{2}.$
Now, using the convexity of $K(\nu),$ \cite[Remark 3.12]{KS1}, we obtain
$$K(\nu)\leq 2(1-2r_0)K(1/4)+(4r_{0}-1)K(1/2)=2(1-2r_0)K(1/4),$$
as $K(1/2)=0.$
The case $1/2\leq \nu \leq 3/4$ follows from the symmetry of function $K(\nu)$ about the line $\nu = 1/2.$\\
The second result is due to the sum of integrals of \eqref{ays3} with respect to $\nu$ over $[1/4,1/2]$ and [1/2, 3/4], respectively.
\end{proof}

\begin{remark}We remark that inequality \eqref{ays3} in Theorem \ref{t2} can be written as,
$$ |||A^{\nu}XB^{1-\nu}-A^{1-\nu}XB^{\nu}||| \leq 2(1-2r_0)|||A^{1/4}XB^{3/4}-A^{3/4}XB^{1/4}|||,$$
equivalently,
\begin{eqnarray}\label{eq5} |||A^{\nu}XB^{1-\nu}-A^{1-\nu}XB^{\nu}||| \leq 2|1-2\nu|\;|||A^{1/4}XB^{3/4}-A^{3/4}XB^{1/4}|||.\end{eqnarray}
Now recall \eqref{ks2} for $\nu=1/4$ or $3/4,$ we obtain,
\begin{eqnarray}\label{eq6} |||A^{1/4}XB^{3/4}-A^{3/4}XB^{1/4}|||\leq \frac{1}{2}|||AX-XB|||.\end{eqnarray}
On combining \eqref{eq5} and \eqref{eq6}, we obtain
\begin{eqnarray*} |||A^{\nu}XB^{1-\nu}-A^{1-\nu}XB^{\nu}||| &\leq& 2|1-2\nu|\;|||A^{1/4}XB^{3/4}-A^{3/4}XB^{1/4}|||\nonumber \\
&\leq& |1-2\nu||||AX-XB|||.
\end{eqnarray*}
 This proves that \eqref{ays3} in Theorem \ref{t2} interpolates \eqref{ks2}. Similarly \eqref{ays3b} refines the integral version of \eqref{ks2}.
\end{remark}
Before stating the next generalization of the difference version of Heinz inequality, we remind two lemmas. For the first lemma, we refer the reader to \cite[p. 343]{horn}. For the used notation, $Y\circ Z$ refers to the Schur (Hadamard) product of $Y$ and $Z$. That is, it is the entrywise multiplication of $Y$ and $Z$.
\begin{lemma}\label{normofshur}
If $Y\in\mathcal{M}_n^+$ and $Z\in\mathcal{M}_n$ then
$$|||Y\circ Z|||\leq \max_{i}y_{ii}\,|||Z|||.$$
\end{lemma}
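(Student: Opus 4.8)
The plan is to exhibit Schur multiplication by $Y$ as a ``doubly stochastic'' completely positive map built from diagonal matrices, after which the bound follows from a known contractivity property of such maps.

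First I would normalise. Both sides are homogeneous of degree one in $Y$, so assume $\max_i y_{ii}=1$; and since $Y\geq 0$, any index $i$ with $y_{ii}=0$ makes the $i$-th row and column of $Y$ (hence of $Y\circ Z$) vanish, so after discarding such indices --- a compression, harmless for $|||Z|||$ --- we may assume every $y_{ii}>0$. With $\Delta=\mathrm{diag}(y_{11},\dots,y_{nn})$ and $\widetilde Y=\Delta^{-1/2}Y\Delta^{-1/2}$ one checks entrywise that $Y\circ Z=\Delta^{1/2}(\widetilde Y\circ Z)\Delta^{1/2}$, and since $|||ASB|||\leq\|A\|\,\|B\|\,|||S|||$ for any unitarily invariant norm, it suffices to prove $|||\widetilde Y\circ Z|||\leq|||Z|||$ for the correlation matrix $\widetilde Y$ (all diagonal entries equal to $1$).

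Next I would diagonalise $\widetilde Y=\sum_k\lambda_k u_ku_k^{*}$ with $\lambda_k\geq 0$. Since $(u_ku_k^{*})\circ Z=D_kZD_k^{*}$ for $D_k=\mathrm{diag}(u_k)$, we get $\widetilde Y\circ Z=\Psi(Z):=\sum_k E_kZE_k^{*}$ with $E_k=\sqrt{\lambda_k}\,D_k$; because the $E_k$ are diagonal, $E_kE_k^{*}=E_k^{*}E_k$, and the common value of $\sum_kE_kE_k^{*}$ and $\sum_kE_k^{*}E_k$ is $\mathrm{diag}((\widetilde Y)_{ii})=I$. So $\Psi$ is a unital \emph{and} trace-preserving completely positive map, and the claim reduces to showing that such a map does not increase any unitarily invariant norm. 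This I would establish via Fan's dominance theorem, i.e.\ by checking $\|\Psi(Z)\|_{(k)}\leq\|Z\|_{(k)}$ for every Ky Fan $k$-norm: the relations $\sum_kE_kE_k^{*}=\sum_kE_k^{*}E_k=I$ give, by a short Cauchy--Schwarz computation, both $\|\Psi(Z)\|\leq\|Z\|$ and $\|\Psi(Z)\|_1\leq\|Z\|_1$ (and the same for the map $W\mapsto\sum_kE_k^{*}WE_k$); feeding these into the dual description $\|T\|_{(k)}=\max\{|\mathrm{tr}(TW^{*})|:\|W\|\leq 1,\ \mathrm{rank}\,W\leq k\}$, using $\mathrm{tr}(\Psi(Z)W^{*})=\mathrm{tr}(Z\sum_kE_k^{*}W^{*}E_k)$ and $\|W\|_1\leq k$ for such $W$, yields the Ky Fan bound for every $k$. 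Undoing the two normalisations then gives $|||Y\circ Z|||\leq(\max_i y_{ii})\,|||Z|||$.

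The genuinely delicate point --- and the reason the lemma is not trivial --- is this passage from the operator norm to all unitarily invariant norms. A term-by-term estimate $\|\sum_kE_kZE_k^{*}\|\leq(\sum_k\|E_k\|^2)\|Z\|$ is useless, since $\sum_k\|E_k\|^2=\sum_k\max_i|(E_k)_{ii}|^2\geq\max_i\sum_k|(E_k)_{ii}|^2=1$ points the wrong way, so the double stochasticity $\sum_kE_kE_k^{*}=\sum_kE_k^{*}E_k=I$ has to be exploited globally, through the self-dual sandwich structure and the Ky Fan reduction, not one Kraus operator at a time. (An alternative to the Ky Fan argument is to quote the eigenvalue majorisation $\lambda(\Psi(H))\prec\lambda(H)$, valid for any doubly stochastic positive linear map and Hermitian $H$, and apply it to $\bigl(\begin{smallmatrix}0&Z\\ Z^{*}&0\end{smallmatrix}\bigr)$ to obtain $s(\Psi(Z))\prec_w s(Z)$.)
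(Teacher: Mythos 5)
Your argument is correct, but note that the paper does not actually prove this lemma: it is quoted as a known fact with a pointer to Horn and Johnson \cite{horn}, so there is no in-paper proof to match against. Your route is essentially the classical one for Schur's theorem on Hadamard multipliers of positive semidefinite matrices: reduce to a correlation matrix $\widetilde Y$, use the spectral decomposition to realise $Z\mapsto \widetilde Y\circ Z$ as $\sum_k E_kZE_k^*$ with diagonal Kraus operators satisfying $\sum_k E_kE_k^*=\sum_k E_k^*E_k=I$, and then invoke the fact that a doubly stochastic completely positive map contracts every unitarily invariant norm. All the individual steps check out: the entrywise identities $Y\circ Z=\Delta^{1/2}(\widetilde Y\circ Z)\Delta^{1/2}$ and $(u_ku_k^*)\circ Z=D_kZD_k^*$ are correct, the discarding of indices with $y_{ii}=0$ is legitimate because positivity forces the corresponding rows and columns of $Y$ to vanish and compressions do not increase unitarily invariant norms, and the Cauchy--Schwarz computations giving $\|\Psi(Z)\|\le\|Z\|$ and $\|\Psi(Z)\|_1\le\|Z\|_1$ are standard. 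The only step you leave implicit is the passage from ``$\|V\|\le 1$ and $\|V\|_1\le k$'' to ``$|\mathrm{tr}(ZV^*)|\le\|Z\|_{(k)}$'' once the rank constraint has been destroyed by $\Psi^*$; this follows from von Neumann's trace inequality together with the elementary fact that $\sum_j s_j(Z)t_j\le\sum_{j\le k}s_j(Z)$ whenever $0\le t_j\le 1$ and $\sum_j t_j\le k$ (equivalently, from the variational formula $\|Z\|_{(k)}=\max\{|\mathrm{tr}(ZV^*)|:\|V\|\le 1,\ \|V\|_1\le k\}$), and is worth a sentence if you want the proof to be fully self-contained. Compared with the textbook treatment, which derives the bound from a singular-value weak-majorization inequality for Hadamard products via the factorization $y_{ij}=c_i^*c_j$ of a Gram matrix, your quantum-channel formulation is no longer but makes the double stochasticity that drives the result completely explicit, and your closing remark correctly identifies why a term-by-term estimate on the Kraus operators cannot work.
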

A good reference for the following Lemma is \cite{bhatiapositive}.
\begin{lemma}\label{loweneroft^r}
If $(\mu_i)$ are positive numbers, then for $0\leq r\leq 1,$ the matrix $Y$ whose entries are
$$y_{ij}=\left\{\begin{array}{cc}\frac{\mu_i^r-\mu_j^r}{\mu_i-\mu_j},&\mu_i\not=\mu_j\\r \mu_i^{r-1},&\mu_i=\mu_j
\end{array}\right.$$
is positive definite.
\end{lemma}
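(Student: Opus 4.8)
The plan is to recognize $Y$ as the Löwner (divided-difference) matrix of the function $f(t)=t^{r}$ evaluated at the points $\mu_{1},\dots,\mu_{n}$, and to exploit the operator monotonicity of $f$ on $(0,\infty)$ for $0\le r\le 1$ (the Löwner--Heinz inequality). By Löwner's theorem, operator monotonicity of a suitably smooth function on an interval is equivalent to positive semidefiniteness of all its Löwner matrices, which would finish the argument at once; but since the excerpt only needs a self-contained route, I would instead give a direct proof through the integral representation of $t^{r}$.

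Concretely, for $0<r<1$ one has
$$t^{r}=\frac{\sin r\pi}{\pi}\int_{0}^{\infty}\frac{t}{t+\lambda}\,\lambda^{r-1}\,d\lambda,\qquad t>0,$$
which can be verified by the substitution $\lambda=ts$ and the Beta-integral $\int_{0}^{\infty}\frac{s^{r-1}}{1+s}\,ds=\frac{\pi}{\sin r\pi}$. For each fixed $\lambda>0$ the divided difference of the single function $g_{\lambda}(t)=\dfrac{t}{t+\lambda}$ computes to
$$\frac{1}{\mu_{i}-\mu_{j}}\!\left(\frac{\mu_{i}}{\mu_{i}+\lambda}-\frac{\mu_{j}}{\mu_{j}+\lambda}\right)=\frac{\lambda}{(\mu_{i}+\lambda)(\mu_{j}+\lambda)},$$
with the obvious limiting value $\dfrac{\lambda}{(\mu_{i}+\lambda)^{2}}$ on the diagonal. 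Hence the Löwner matrix of $g_{\lambda}$ equals $\lambda\,v_{\lambda}v_{\lambda}^{*}$, where $v_{\lambda}=\big(\tfrac{1}{\mu_{1}+\lambda},\dots,\tfrac{1}{\mu_{n}+\lambda}\big)^{T}$, a rank-one positive semidefinite matrix. Since the Löwner matrix depends linearly on $f$ and the representation above writes $t^{r}$ as an integral of the $g_{\lambda}$ against the positive measure $\tfrac{\sin r\pi}{\pi}\lambda^{r-1}\,d\lambda$, the matrix $Y$ is obtained by integrating $\lambda\,v_{\lambda}v_{\lambda}^{*}$ against this measure, and is therefore positive semidefinite (approximating the integral by Riemann sums, each of which is a nonnegative combination of positive semidefinite matrices). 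The boundary cases are immediate: for $r=0$ the matrix $Y$ is the zero matrix, and for $r=1$ it is the all-ones matrix $\mathbf{1}\mathbf{1}^{*}$.

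To upgrade to strict positive definiteness when $0<r<1$ and the $\mu_{i}$ are pairwise distinct, I would argue that $Yc=0$ forces $\int_{0}^{\infty}\lambda\,|v_{\lambda}^{*}c|^{2}\,\lambda^{r-1}\,d\lambda=0$, hence $\sum_{i}\frac{c_{i}}{\mu_{i}+\lambda}\equiv 0$ in $\lambda$; clearing denominators yields a polynomial identity that, by distinctness of the $\mu_{i}$, forces every $c_{i}=0$. The only points requiring care are the justification that the diagonal of $Y$ really is $r\mu_{i}^{r-1}$ (which matches $\frac{d}{dt}t^{r}\big|_{t=\mu_{i}}$ and the diagonal limit above) and the interchange of the integral with the (bounded, rank-$1$) integrand; both are routine dominated-convergence bookkeeping, so I expect no conceptual obstacle and a short write-up. (I would also remark that ``positive definite'' in the statement should be read as positive semidefinite unless the $\mu_{i}$ are distinct and $0<r<1$, as the boundary cases show.)
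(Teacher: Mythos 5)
The paper does not actually prove this lemma; it only cites Bhatia's \emph{Positive definite Matrices}, and your argument is precisely the standard one given there: write $Y$ as the L\"owner matrix of $t\mapsto t^{r}$, use the integral representation $t^{r}=\frac{\sin r\pi}{\pi}\int_{0}^{\infty}\frac{t}{t+\lambda}\lambda^{r-1}\,d\lambda$, and observe that each $g_{\lambda}$ contributes the rank-one matrix $\lambda\,v_{\lambda}v_{\lambda}^{*}$, so $Y\geq 0$; your verification of the diagonal entries, the convergence of the integral, and the strict-definiteness argument via the polynomial identity are all correct. Your closing remark is also well taken: the lemma should be read as asserting positive \emph{semi}definiteness (e.g.\ $r=1$ gives the all-ones matrix, and repeated $\mu_{i}$ force singularity), which is all that the paper's application in Theorem \ref{first_gener_neg_heinz} requires, since Lemma \ref{normofshur} only needs $Y\geq 0$.
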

Now we have the following generalization of the difference version of Heinz inequality.
\begin{theorem}\label{first_gener_neg_heinz}
Let $A,B\in\mathcal{M}_n^{++}$ and $X\in\mathcal{M}_n$. Then for $ {\alpha}\geq 1, \frac{1-{\alpha}}{2}\leq\nu\leq\frac{1+{\alpha}}{2},$ and any unitarily invariant norm $|||\cdot|||$ on $\mathcal{M}_n$,
\begin{eqnarray*}
&&{\alpha}|||A^{\nu}XB^{1-\nu}-A^{1-\nu}XB^{\nu}|||\\
&\leq&|2\nu-1|\max(\|A^{1-{\alpha}}\|,\|B^{1-{\alpha}}\|)\;\;|||A^{{\alpha}}X-XB^{{\alpha}}|||,
\end{eqnarray*}
where $\|\cdot\|$ is the operator norm.
\end{theorem}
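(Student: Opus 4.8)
The plan is to reduce the inequality to the case $A=B$ by a $2\times 2$ block–operator device, after which two-sided multiplication by powers of a single positive matrix becomes a Hadamard (Schur) multiplier that can be controlled directly by Lemma \ref{normofshur} and Lemma \ref{loweneroft^r}.

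First I would make two normalisations. Replacing $\nu$ by $1-\nu$ changes $A^{\nu}XB^{1-\nu}-A^{1-\nu}XB^{\nu}$ only by a sign, and leaves its norm, the admissible range $[\frac{1-\alpha}{2},\frac{1+\alpha}{2}]$ (which is symmetric about $\frac12$), and the right-hand side of the claim unchanged; so I may assume $\frac12\le\nu\le\frac{1+\alpha}{2}$, which makes $\beta:=\frac{2\nu-1}{\alpha}\in[0,1]$. Then, putting $\mathbb{A}=A\oplus B\in\mathcal{M}_{2n}^{++}$ and $\mathbb{X}=\begin{pmatrix}0&X\\0&0\end{pmatrix}$, one has $\mathbb{A}^{t}\mathbb{X}\mathbb{A}^{s}=\begin{pmatrix}0&A^{t}XB^{s}\\0&0\end{pmatrix}$ for all real $s,t$; since a unitarily invariant norm of such an off-diagonal block equals the corresponding norm of its single nonzero entry, this yields $|||\mathbb{A}^{\nu}\mathbb{X}\mathbb{A}^{1-\nu}-\mathbb{A}^{1-\nu}\mathbb{X}\mathbb{A}^{\nu}|||=|||A^{\nu}XB^{1-\nu}-A^{1-\nu}XB^{\nu}|||$, $|||\mathbb{A}^{\alpha}\mathbb{X}-\mathbb{X}\mathbb{A}^{\alpha}|||=|||A^{\alpha}X-XB^{\alpha}|||$ and $\|\mathbb{A}^{1-\alpha}\|=\max(\|A^{1-\alpha}\|,\|B^{1-\alpha}\|)$. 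Hence it suffices to prove the theorem when $A=B$.

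Assuming $A=B=U\,{\rm diag}(\lambda_1,\dots,\lambda_n)\,U^{*}$ with $\lambda_i>0$ and writing $Y=U^{*}XU$, I would conjugate by $U$ and read off the entrywise action of powers of $A$ to get $A^{\nu}XA^{1-\nu}-A^{1-\nu}XA^{\nu}=U\big(\Phi\circ (U^{*}(A^{\alpha}X-XA^{\alpha})U)\big)U^{*}$, where $\Phi=[\phi_{ij}]$ with $\phi_{ij}=\dfrac{\lambda_i^{\nu}\lambda_j^{1-\nu}-\lambda_i^{1-\nu}\lambda_j^{\nu}}{\lambda_i^{\alpha}-\lambda_j^{\alpha}}$ for $\lambda_i\ne\lambda_j$ and $\phi_{ii}$ equal to the removable limiting value $\frac{2\nu-1}{\alpha}\lambda_i^{1-\alpha}$. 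The task is then to bound the Hadamard multiplier by $\Phi$. To do this I would factor $\phi_{ij}=\lambda_i^{1-\nu}\cdot\dfrac{\lambda_i^{2\nu-1}-\lambda_j^{2\nu-1}}{\lambda_i^{\alpha}-\lambda_j^{\alpha}}\cdot\lambda_j^{1-\nu}$ and substitute $a_i:=\lambda_i^{\alpha}$; since $\lambda_i^{2\nu-1}=a_i^{\beta}$ with $\beta\in[0,1]$, the middle factor is precisely the $(i,j)$ entry of the matrix in Lemma \ref{loweneroft^r} for the family $(a_i)$ and exponent $\beta$, hence the entry of a positive semidefinite $L$ with $L_{ii}=\beta a_i^{\beta-1}$. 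Therefore $\Phi=DLD$ with $D={\rm diag}(\lambda_i^{1-\nu})$, so $\Phi\ge 0$, and $\phi_{ii}=\lambda_i^{2(1-\nu)}\beta a_i^{\beta-1}=\frac{2\nu-1}{\alpha}\lambda_i^{1-\alpha}$, giving $\max_i\phi_{ii}=\frac{2\nu-1}{\alpha}\|A^{1-\alpha}\|$. Lemma \ref{normofshur} then yields $|||A^{\nu}XA^{1-\nu}-A^{1-\nu}XA^{\nu}|||\le\frac{2\nu-1}{\alpha}\|A^{1-\alpha}\|\,|||A^{\alpha}X-XA^{\alpha}|||$, and multiplying by $\alpha$ and undoing the normalisations finishes the proof.

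The one genuinely delicate point is establishing that the Hadamard-multiplier matrix $\Phi$ is positive semidefinite and that its largest diagonal entry comes out exactly as $\frac{|2\nu-1|}{\alpha}\max(\|A^{1-\alpha}\|,\|B^{1-\alpha}\|)$; this is where L\"owner's theorem for $t\mapsto t^{\beta}$, $0\le\beta\le1$ (Lemma \ref{loweneroft^r}), is used, and it is precisely why reducing to $A=B$ — so that $\Phi$ is governed by a single family of eigenvalues and Lemma \ref{loweneroft^r} applies verbatim — is essential. Everything else is routine spectral-decomposition and Hadamard-product bookkeeping.
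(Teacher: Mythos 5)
Your proposal is correct and follows essentially the same route as the paper: the same $2\times 2$ block reduction to $A=B$, the same Schur-multiplier decomposition with the L\"owner matrix for $t\mapsto t^{(2\nu-1)/\alpha}$ (Lemma \ref{loweneroft^r}), and the same application of Lemma \ref{normofshur} to the diagonal entries. The only cosmetic difference is that you dispose of the case $\nu\le\frac12$ up front via the symmetry $\nu\mapsto1-\nu$, whereas the paper handles it at the end by observing that the multiplier matrix is then negative semidefinite.
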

\begin{proof}
It suffices to prove the required inequality for the special case when $A=B$ and $A$ is diagonal. Then the general case follows by replacing $A$ with
$\left(\begin{array}{cc}A&0\\0&B\end{array}\right)$ and $X$ with $\left(\begin{array}{cc}0&X\\0&0\end{array}\right)$.\\
So, assume $A={\text{diag}}(\lambda_i)>0,$ and let $W=A^{\nu}XA^{1-\nu}-A^{1-\nu}XA^{\nu}.$ Then $W=Y\circ Z$ where
$Z=A^{\alpha}X-XA^{\alpha}$ and $$Y=\left\{\begin{array}{cc}\frac{\lambda_i^{\nu}\lambda_j^{1-\nu}-
\lambda_{i}^{1-\nu}\lambda_j^{\nu}}{\lambda_i^{\alpha}-\lambda_j^{\alpha}}.&\lambda_i\not=\lambda_j\\ \frac{2\nu-1}{{\alpha}\lambda_i^{{\alpha}-1}},&\lambda_i=\lambda_j\end{array}\right..$$ Observe that when  $\frac{1}{2}\leq\nu\leq\frac{1+{\alpha}}{2},$ we have $0\leq \frac{2\nu-1}{{\alpha}}\leq 1$ and hence, $Y\geq 0$ because
$$
y_{ij}=\lambda_i^{1-\nu}\left(\frac{(\lambda_i^{\alpha})^{\frac{2\nu-1}{{\alpha}}}-(\lambda_j^{\alpha})
^{\frac{2\nu-1}{{\alpha}}}}
{\lambda_i^{\alpha}-\lambda_j^{\alpha}}\right)\lambda_j^{1-\nu}$$ when $ \lambda_i\not=\lambda_j$ and $
y_{ii}=\lambda_i^{1-\nu}\left(\frac{2\nu-1}{{\alpha}}\lambda_i^{2\nu-1-{\alpha}}\right)\lambda_i^{1-\nu}$
by virtue of Lemma \ref{loweneroft^r}, on letting $r=\frac{2\nu-1}{{\alpha}}$ and $\mu_i=\lambda_i^{\alpha}$. Consequently, by Lemma \ref{normofshur},
\begin{eqnarray*}
|||W|||&\leq&\max_{i}y_{ii}\;|||Z|||\\
&=&\frac{1}{{\alpha}}(2\nu-1)\|A^{1-{\alpha}}\|\;|||Z|||.
\end{eqnarray*}
Now, if $\frac{1-{\alpha}}{2}\leq\nu\leq\frac{1}{2}$, we have $Y\leq 0$, hence $W=|Y|\circ (-Z),$ which then implies the result for these values of $\nu.$
\end{proof}

Another generalization of the difference version reads as follows.
\begin{theorem}
Let $A,B\in \mathcal{M}_n^{++}$, $X\in\mathcal{M}_n$ and $0<r\leq 1$. Then
$$|||A^rX-XB^r|||\leq r\max(\|A^{r-1}\|,\|B^{r-1}\|)|||AX-XA|||.$$
\end{theorem}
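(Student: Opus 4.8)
The plan is to run exactly the machinery used in the proof of Theorem \ref{first_gener_neg_heinz}: first reduce to a one-operator diagonal statement, then express the relevant commutator as a Schur product with a L\"owner matrix, and finally apply Lemmas \ref{loweneroft^r} and \ref{normofshur}. (In the displayed inequality the right-hand side is to be read as $|||AX-XB|||$, which is precisely what the reduction below produces.)

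First I would pass to the case $A=B$. Replacing $A$ by $\widehat{A}=\left(\begin{array}{cc}A&0\\0&B\end{array}\right)$ and $X$ by $\widehat{X}=\left(\begin{array}{cc}0&X\\0&0\end{array}\right)$, one checks that $\widehat{A}^{\,r}\widehat{X}-\widehat{X}\widehat{A}^{\,r}=\left(\begin{array}{cc}0&A^{r}X-XB^{r}\\0&0\end{array}\right)$, that $\widehat{A}\widehat{X}-\widehat{X}\widehat{A}=\left(\begin{array}{cc}0&AX-XB\\0&0\end{array}\right)$, and that $\|\widehat{A}^{\,r-1}\|=\max(\|A^{r-1}\|,\|B^{r-1}\|)$; since the unitarily invariant norm of a block matrix $\left(\begin{array}{cc}0&Y\\0&0\end{array}\right)$ equals $|||Y|||$, it suffices to treat $A=B$. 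Diagonalizing $A=U\,\mathrm{diag}(\lambda_i)\,U^{*}$ and replacing $X$ by $U^{*}XU$, unitary invariance then lets me assume $A=\mathrm{diag}(\lambda_1,\dots,\lambda_n)>0$.

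Next I would set $W=A^{r}X-XA^{r}$ and $Z=AX-XA$. An entrywise computation gives $W=M\circ Z$, where $M$ has entries $m_{ij}=\dfrac{\lambda_i^{r}-\lambda_j^{r}}{\lambda_i-\lambda_j}$ for $\lambda_i\neq\lambda_j$ and $m_{ii}=r\lambda_i^{r-1}$, the diagonal values being the removable $0/0$ limit of the off-diagonal expression. This $M$ is exactly the matrix of Lemma \ref{loweneroft^r} with $\mu_i=\lambda_i$ and exponent $r\in(0,1]$, hence $M\geq 0$. Lemma \ref{normofshur} then yields $|||W|||=|||M\circ Z|||\leq\max_i m_{ii}\;|||Z|||=r\,\max_i\lambda_i^{r-1}\;|||Z|||$. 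Finally, since $r-1\leq 0$ the function $t\mapsto t^{r-1}$ is nonincreasing on $(0,\infty)$, so $\max_i\lambda_i^{r-1}=(\min_i\lambda_i)^{r-1}=\|A^{r-1}\|$, the operator norm of the positive definite matrix $A^{r-1}$; combining this with the previous estimate settles the diagonal case, and the reduction finishes the proof.

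I do not expect a genuine obstacle, as all the ingredients are already in place. The two points that need a moment's care are the verification that $W=M\circ Z$ with the asserted diagonal entries and the identity $\max_i\lambda_i^{r-1}=\|A^{r-1}\|$ — the latter being exactly where the hypothesis $r\leq 1$ is used in an essential way; for $r>1$ the L\"owner matrix of $t\mapsto t^{r}$ need not be positive, so this route fails, which is consistent with the fact that in Theorem \ref{first_gener_neg_heinz} the range $\alpha\geq 1$ is handled through a different factorization.
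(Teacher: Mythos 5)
Your proposal is correct and follows essentially the same route as the paper: reduce to $A=B$ diagonal via the $2\times 2$ block trick, write $A^rX-XA^r$ as the Schur product of the L\"owner matrix of $t\mapsto t^r$ (positive by Lemma \ref{loweneroft^r}) with $AX-XA$, and bound via Lemma \ref{normofshur}. You also correctly flag that the right-hand side of the stated inequality should read $|||AX-XB|||$, which is what the reduction actually yields.
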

\begin{proof}
This follows immediately by noting that $A^rX-XA^r=Y\circ Z$ where $A={\text{diag}}(\lambda_i),$ $Z=AX-XA$ and
$$y_{ij}=\left\{\begin{array}{cc}\frac{\lambda_i^r-\lambda_j^r}{\lambda_i-\lambda_j},&\lambda_i\not=\lambda_j\\ r\lambda_i^{r-1},&
\lambda_i=\lambda_j\end{array}\right..$$ Then arguing like Theorem \ref{first_gener_neg_heinz} implies the required inequality.
\end{proof}

On the other hand, a reverse of the difference version of the Heinz inequality maybe obtained as follows.

\begin{proposition}
Let $A, B \in \mathbb{B}(\mathscr{H})_{++},$ $X\in \mathcal{I}$ and let $\nu\not\in [0,1].$ Then
$$|||A^{1-\nu}XB^{\nu}-A^{\nu}XB^{1-\nu}|||\geq |2\nu-1|\;|||AX-XB|||.$$
\end{proposition}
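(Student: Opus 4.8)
The plan is to reduce the operator inequality to a scalar one via the standard trick already used in the paper, and then exploit the \emph{convexity} of the Heinz function $K(\nu)$ on $[0,1]$ together with the base identity $K(1/2)=0$. First, using the $2\times 2$ block matrix device that appears in the proof of Theorem \ref{first_gener_neg_heinz} --- replacing $A$ by $\mathrm{diag}(A,B)$ and $X$ by the off-diagonal block with $X$ in the corner --- it suffices to prove the claim in the diagonalizable case $A=B=\mathrm{diag}(\lambda_i)>0$, where the quantity in question becomes a Schur multiplier acting on $Z=AX-XA$, with multiplier entries
\[
y_{ij}=\frac{\lambda_i^{1-\nu}\lambda_j^{\nu}-\lambda_i^{\nu}\lambda_j^{1-\nu}}{\lambda_i-\lambda_j}\quad(\lambda_i\neq\lambda_j),\qquad y_{ii}=(1-2\nu)\,\lambda_i^{-1}.
\]
Since now $\nu\notin[0,1]$, one shows that the matrix $[\,y_{ij}\,]$ is \emph{not} contractive-in-norm but rather dominates the constant $|2\nu-1|$ in an appropriate sense; equivalently, one compares $y_{ij}$ entrywise against the value $|2\nu-1|$ obtained from the borderline case and invokes a reverse of Lemma \ref{normofshur} available for positive semidefinite multipliers.

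Concretely, I would argue as follows. Consider the scalar function $g_t(\nu)=t^{1-\nu}-t^{\nu}$ for fixed $t>0$; then $y_{ij}=g_{\lambda_i/\lambda_j}(\nu)\cdot\lambda_j^{\,0}/(\text{denominator})$ after a homogeneity normalization, so everything reduces to understanding $h(\nu):=\dfrac{s^{\nu}+s^{-\nu}-?}{\cdots}$ --- more cleanly, put $s=\sqrt{\lambda_i/\lambda_j}$ and observe that the ratio $\dfrac{\lambda_i^{1-\nu}\lambda_j^{\nu}-\lambda_i^{\nu}\lambda_j^{1-\nu}}{\lambda_i-\lambda_j}$ equals $\sqrt{\lambda_i\lambda_j}\,\cdot\,\dfrac{s^{1-2\nu}-s^{2\nu-1}}{s-s^{-1}}\cdot\frac{1}{\sqrt{\lambda_i\lambda_j}}=\dfrac{\sinh((2\nu-1)u)}{\sinh(2u)}$ where $s=e^{u}$. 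The key scalar fact is then that $\varphi(u):=\dfrac{\sinh((2\nu-1)u)}{\sinh(2u)}$ satisfies $|\varphi(u)|\geq \tfrac12|2\nu-1|\cdot\dfrac{|\sinh u|}{|\sinh 2u|}\cdot 2=\cdots$; more to the point, because $x\mapsto \sinh(cx)/\sinh x$ is \emph{increasing in $|c|$ for $c$ outside $[-1,1]$-scaled range}, one gets the entrywise lower bound linking the Heinz multiplier at $\nu$ to the difference multiplier $\dfrac{\lambda_i-\lambda_j}{\lambda_i-\lambda_j}\equiv 1$ for $AX-XB$. This is exactly the reverse direction of the inequality that, for $\nu\in[0,1]$, gives $K(\nu)\le|2\nu-1|\,|||AX-XB|||$.

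Alternatively, and probably more cleanly, I would avoid Schur multipliers entirely and use convexity: the function $\nu\mapsto K(\nu)$ (defined by the same formula $|||A^{\nu}XB^{1-\nu}-A^{1-\nu}XB^{\nu}|||$, which makes sense for all real $\nu$ since $A,B$ are invertible) is convex on all of $\mathbb{R}$ --- this follows from the same argument as in \cite[Remark 3.12]{KS1}, namely that it is a norm of a vector-valued function each of whose coordinates is $|f(\nu)|$ with $f$ a difference of two log-convex exponentials, hence convex. Granting convexity on $\mathbb{R}$ and the three anchor values $K(1/2)=0$, $K(0)=K(1)=|||AX-XB|||$, a convex function through $(1/2,0)$, $(1,|||AX-XB|||)$ lies \emph{above} the secant line for arguments $\nu>1$, and similarly below/above on $\nu<0$ by the symmetry $K(\nu)=K(1-\nu)$; evaluating the secant through $(1/2,0)$ and $(1,|||AX-XB|||)$ at a general $\nu$ gives precisely $|2\nu-1|\,|||AX-XB|||$, and convexity forces $K(\nu)\ge$ this secant value outside $[1/2,1]$ on that side, then symmetry covers $\nu<0$. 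The main obstacle, and the step deserving the most care, is justifying convexity of $K(\nu)$ on the \emph{whole} real line (not just $[0,1]$): one must check that the pointwise construction of $A^{\nu}XB^{1-\nu}$ via spectral calculus still yields, after the $2\times2$ reduction and passing to matrix entries, functions of $\nu$ of the form $c\,|e^{a\nu}-e^{b\nu}|$ which are genuinely convex for all real $\nu$ (they are, since $e^{a\nu}-e^{b\nu}$ has at most one real zero, so its absolute value is convex), and then to combine these through the symmetric gauge function, which preserves convexity. Once that is in hand, the inequality is immediate from the secant-line characterization of convexity.
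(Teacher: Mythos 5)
Your two suggested routes both stop short of a proof, and the gaps are in the places that actually matter. In the Schur-multiplier route, the decisive step is asserted, not proved: there is no ``reverse of Lemma \ref{normofshur}'' of the kind you invoke, and an \emph{entrywise} lower bound $|y_{ij}|\geq|2\nu-1|$ gives no information about $|||Y\circ Z|||$ versus $|||Z|||$ (Schur multiplication is not monotone with respect to entrywise domination). The correct way to run that argument is to write $AX-XA$ as the Schur product of the \emph{reciprocal} matrix $\bigl[\tfrac{\lambda_i-\lambda_j}{\lambda_i^{\nu}\lambda_j^{1-\nu}-\lambda_i^{1-\nu}\lambda_j^{\nu}}\bigr]$ with $A^{\nu}XA^{1-\nu}-A^{1-\nu}XA^{\nu}$ and prove that this reciprocal matrix, i.e.\ $\bigl[\sinh(u_i-u_j)/\sinh((2\nu-1)(u_i-u_j))\bigr]$, is positive semidefinite when $|2\nu-1|\geq1$, then apply Lemma \ref{normofshur} to \emph{it}; none of this is carried out. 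In the convexity route, the secant-line logic is fine (convexity on $\mathbb{R}$ plus $K(1/2)=0$, $K(1)=|||AX-XB|||$ and the symmetry $K(\nu)=K(1-\nu)$ do yield the claim), but your justification of convexity of $K$ on all of $\mathbb{R}$ is not valid for a general unitarily invariant norm: a symmetric gauge function is applied to the \emph{singular values}, not to the matrix entries, so convexity of each $\nu\mapsto|m_{ij}(\nu)|$ does not pass through $|||\cdot|||$. That argument works essentially only for the Hilbert--Schmidt norm. The paper does state convexity of $K$ on $\mathbb{R}$, but only in the \emph{subsequent} proposition and with the proof deferred to an external reference, so as written your proposal rests on an unproved (and nontrivially proved) input.

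For comparison, the paper's own proof is a two-line substitution that needs neither Schur multipliers nor convexity: starting from the known inequality \eqref{ks2}, valid for parameters in $[0,1]$, set $\mu=\tfrac{\nu}{2\nu-1}$ (which lies in $[0,1]$ exactly when $\nu\notin[0,1]$), $C=A^{2\nu-1}$, $D=B^{2\nu-1}$ and $Z=A^{1-\nu}XB^{1-\nu}$. Then $C^{\mu}ZD^{1-\mu}-C^{1-\mu}ZD^{\mu}=AX-XB$, $CZ-ZD=A^{\nu}XB^{1-\nu}-A^{1-\nu}XB^{\nu}$ and $|2\mu-1|=1/|2\nu-1|$, so the forward Heinz difference inequality applied to $(C,D,Z,\mu)$ is precisely the desired reverse inequality. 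If you want to salvage your second route, the honest fix is to cite the convexity of $K$ on $\mathbb{R}$ as an external result rather than derive it; but the substitution argument is both shorter and self-contained given \eqref{ks2}.
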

\begin{proof}
For $C,D\in \mathbb{B}(\mathscr{H})_{+}, Z\in \mathcal{I}$ and $0\leq \mu\leq 1,$ we have
\begin{equation}\label{needed_rever_nega_heinz}
|||C^{\mu}ZD^{1-\mu}-C^{1-\mu}ZD^{\mu}|||\leq |2\mu-1|\;|||CZ-ZD|||.
\end{equation}
Now if $\nu\not\in [0,1]$, let $\mu=\frac{\nu}{2\nu-1}.$ Then $\mu\in [0,1].$ For $A, B \in \mathbb{B}(\mathscr{H})_{++}, X\in\mathcal{I},$ and let
$$C=A^{2\nu-1}, Z=A^{1-\nu}XB^{1-\nu}\;{\text{and}}\;D=B^{2\nu-1}.$$ Then substituting these parameters in \eqref{needed_rever_nega_heinz} implies the desired inequality.
\end{proof}

As mentioned in the introduction, in \cite[Remark 3.12]{KS1} it is proved that the function $\nu\mapsto |||A^{1-\nu}XB^{\nu}-A^{\nu}XB^{1-\nu}|||$ is convex on $[0,1].$ In the next result, we extend this convexity to $\mathbb{R}$. The proof of this result is based on some delicate manipulations of the given parameters. The computations follow the same reasoning as in the proof of  \cite[Theorem 4, p. 14]{SabMIA}, and hence, we do not include them here.

\begin{proposition}
Let $A, B \in \mathbb{B}(\mathscr{H})_{++},$ $X\in \mathcal{I}$ and let $K(\nu)=|||A^{1-\nu}XB^{\nu}-A^{\nu}XB^{1-\nu}|||.$ Then $f$ is convex on $\mathbb{R}$.
\end{proposition}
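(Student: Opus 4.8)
The plan is to reduce the statement to the known convexity on $[0,1]$ (proved in \cite[Remark 3.12]{KS1}) together with a parameter substitution that converts the exponents $1-\nu$ and $\nu$ lying outside $[0,1]$ into a genuine Heinz-type difference with exponents inside $[0,1]$. The key observation is the one already exploited in the reverse inequality above: if $\nu\notin[0,1]$, then writing $\mu=\frac{\nu}{2\nu-1}$ gives $\mu\in[0,1]$, and with the choices $C=A^{2\nu-1}$, $D=B^{2\nu-1}$, $Z=A^{1-\nu}XB^{1-\nu}$ one has
\begin{eqnarray*}
C^{\mu}ZD^{1-\mu}-C^{1-\mu}ZD^{\mu}=A^{1-\nu}XB^{\nu}-A^{\nu}XB^{1-\nu},
\end{eqnarray*}
so that $K(\nu)=|||C^{\mu(\nu)}ZD^{1-\mu(\nu)}-C^{1-\mu(\nu)}ZD^{\mu(\nu)}|||$ with $Z$ also depending on $\nu$. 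This expresses $K$ on $(-\infty,0)$ and on $(1,\infty)$ in terms of the Heinz difference functional, but now with a $\nu$-dependent inner factor $Z$, so the bare convexity result cannot be quoted directly.

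First I would fix an arbitrary unit vector (or, for a general unitarily invariant norm, pass through Ky Fan's maximal characterization) and reduce, exactly as in \cite[Theorem 4, p. 14]{SabMIA}, to a scalar inequality of the form $K\big(\tfrac{\nu_1+\nu_2}{2}\big)\le\tfrac{K(\nu_1)+K(\nu_2)}{2}$; by continuity it suffices to establish this midpoint inequality for all real $\nu_1,\nu_2$. Next I would treat the three cases according to where $\nu_1,\nu_2$ and their midpoint fall: if all three lie in $[0,1]$ the claim is \cite[Remark 3.12]{KS1}; if all three lie in the same outside interval, the substitution above, combined with the triangle inequality to absorb the $\nu$-dependence of $Z=A^{1-\nu}XB^{1-\nu}$ into an exponential (log-convex) factor, reduces matters to the joint convexity/log-convexity estimates of the Heinz difference in \emph{both} the exponent and the operator entries; the remaining mixed cases are handled by first moving to the boundary point $0$ or $1$ using monotonicity of $K$ on each half-line (which itself follows from convexity plus the value $K(1/2)=0$) and then invoking the already-settled cases.

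The main obstacle is precisely the $\nu$-dependence of the inner factor $Z$: unlike the reverse inequality, where a single substitution suffices, here the midpoint of $\nu_1$ and $\nu_2$ does not correspond under $\nu\mapsto\mu(\nu)$ to the midpoint of $\mu(\nu_1)$ and $\mu(\nu_2)$ — the map $\mu(\nu)=\frac{\nu}{2\nu-1}$ is a Möbius transformation, not affine — and simultaneously $Z=A^{1-\nu}XB^{1-\nu}$ changes. I expect the resolution to be the same bookkeeping as in \cite{SabMIA}: rewrite $A^{1-\nu}XB^{\nu}-A^{\nu}XB^{1-\nu}=A^{1/2}\big(A^{\frac{1-2\nu}{2}}YB^{\frac{2\nu-1}{2}}-A^{\frac{2\nu-1}{2}}YB^{\frac{1-2\nu}{2}}\big)B^{1/2}$ with $Y=A^{-1/2}XB^{-1/2}$ (legitimate since $A,B$ are invertible), so that the function becomes $s\mapsto|||A^{1/2}(A^{s}YB^{-s}-A^{-s}YB^{s})B^{1/2}|||$ with $s=\frac{1-2\nu}{2}$ ranging over all of $\mathbb{R}$ and \emph{no} residual hidden dependence; one then checks this is an even, convex function of $s$ — even by inspection, and convex on $[0,\infty)$ by the same Löwner-matrix / Schur-product argument used for Theorem \ref{first_gener_neg_heinz}, extended off $[0,1/2]$ via the integral representation of $t\mapsto t^{r}$ for $r>1$. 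Since $\nu\mapsto s=\frac{1-2\nu}{2}$ is affine, convexity in $s$ transfers to convexity in $\nu$ on all of $\mathbb{R}$, completing the proof; the details, being a routine transcription of \cite{SabMIA}, would be omitted as the authors indicate.
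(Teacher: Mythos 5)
Your proposal does not close the gap the paper leaves open, and several of its key steps are either circular or false. The central problem is that you never actually establish midpoint convexity for exponents outside $[0,1]$. Your "resolution" rewrites the difference as $A^{1/2}\bigl(A^{s}YB^{-s}-A^{-s}YB^{s}\bigr)B^{1/2}$ with $s=\frac{1-2\nu}{2}$; first, the identity is wrong as stated (with $Y=A^{-1/2}XB^{-1/2}$ the right-hand side equals $A^{\frac12-\nu}XB^{\nu-\frac12}-A^{\nu-\frac12}XB^{\frac12-\nu}$, not $A^{1-\nu}XB^{\nu}-A^{\nu}XB^{1-\nu}$; one must take $Y=X$), and second, once corrected, the function $s\mapsto|||A^{\frac12+s}XB^{\frac12-s}-A^{\frac12-s}XB^{\frac12+s}|||$ is literally $K(\tfrac12-s)$, so the affine reparametrization carries no content: "convex in $s$" is exactly the statement to be proved. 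The proposed engine for that convexity also fails: the L\"owner/Schur-product argument of Theorem \ref{first_gener_neg_heinz} rests on the operator monotonicity of $t\mapsto t^{r}$ for $0\leq r\leq1$, and for $r>1$ the corresponding L\"owner matrices are not positive semidefinite, so there is no "extension via the integral representation"; moreover that argument yields comparisons between two Heinz differences, not convexity. Finally, handling the "mixed cases" by monotonicity on each half-line cannot work: piecewise convexity plus monotonicity does not control the one-sided derivatives at the junction points $0$ and $1$, which is precisely what midpoint convexity for $\nu_1,\nu_2$ straddling those points demands (an even function convex on each half-line need not be convex on $\mathbb{R}$).

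For comparison, the paper itself gives no computation and defers to \cite[Theorem 4]{SabMIA}, where the argument is a one-line application of the arithmetic--geometric mean inequality, valid for \emph{all} real exponents precisely because $A,B$ are invertible. For $\nu_1\geq\nu_2$ real, set $S=A^{\nu_1-\nu_2}$, $T=B^{\nu_1-\nu_2}$ (positive invertible) and $W=A^{\nu_2}XB^{1-\nu_1}-A^{1-\nu_1}XB^{\nu_2}\in\mathcal{I}$. A direct computation gives $S^{1/2}WT^{1/2}=A^{\nu}XB^{1-\nu}-A^{1-\nu}XB^{\nu}$ with $\nu=\frac{\nu_1+\nu_2}{2}$, and $SW+WT=\bigl(A^{\nu_1}XB^{1-\nu_1}-A^{1-\nu_1}XB^{\nu_1}\bigr)+\bigl(A^{\nu_2}XB^{1-\nu_2}-A^{1-\nu_2}XB^{\nu_2}\bigr)$, so $2|||S^{1/2}WT^{1/2}|||\leq|||SW+WT|||$ together with the triangle inequality yields $2K(\nu)\leq K(\nu_1)+K(\nu_2)$ for all $\nu_1,\nu_2\in\mathbb{R}$; continuity then upgrades midpoint convexity to convexity. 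This is the decomposition your proposal is missing, and with it none of the case analysis, M\"obius substitution, or L\"owner-matrix machinery is needed.
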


This convexity entails the following difference version of the   Heinz inequality. The proof follows immediately from \cite[Theorem 1, p.4 and Theorem 2, p.6]{SabMIA}, taking $a=0, b=1.$
\begin{corollary}
Let $A, B \in \mathbb{B}(\mathscr{H})_{++},$ $X\in \mathcal{I}, \nu\geq 0$ and let $N\in\mathbb{N}$. Then
$$K(0)+\sum_{j=1}^{N}2^j\nu\left[\frac{K(0)+K(2^{1-j})}{2}-K(2^{-j})\right]\leq K(-\nu).$$ On the other hand, if $\nu\leq -1$, then
$$K(0)-\sum_{j=1}^{N}2^j(1+\nu)\left[\frac{K(1)+K(1-2^{1-j})}{2}-K(1-2^{-j})\right]\leq K(-\nu).$$
\end{corollary}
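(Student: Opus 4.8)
The plan is to deduce both inequalities directly from two facts: the convexity of $K$ on all of $\mathbb{R}$, which is precisely the content of the preceding proposition, and the symmetry $K(0)=K(1)$ (indeed $K(0)=|||AX-XB|||=K(1)$, or simply $K(\nu)=K(1-\nu)$). The mechanism is a dyadic telescoping of the ``leftward extrapolation'' of a convex function. The starting observation is elementary: if $f$ is convex on $\mathbb{R}$, $\ell>0$ and $\nu\ge 0$, then $0=\tfrac{\ell}{\ell+\nu}(-\nu)+\tfrac{\nu}{\ell+\nu}\ell$ is a convex combination, so $f(0)\le\tfrac{\ell}{\ell+\nu}f(-\nu)+\tfrac{\nu}{\ell+\nu}f(\ell)$, which rearranges to
\[
f(-\nu)\ \ge\ f(0)+\frac{\nu}{\ell}\bigl(f(0)-f(\ell)\bigr).
\]
Taking $\ell=2^{-j}$ and writing $D_j:=f(0)-f(2^{-j})$, this reads $f(-\nu)\ge f(0)+2^{j}\nu D_j$ for every integer $j\ge 0$; the case $j=0$ is the crude bound $f(-\nu)\ge f(0)+\nu(f(0)-f(1))$, while larger $j$ should yield progressively better bounds.

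Next I would extract the recursion that converts the ``strong'' bound at level $N$ into the ``crude'' bound plus a sum of nonnegative Jensen gaps. Since $2^{-j}$ is the midpoint of $0$ and $2^{1-j}$, a one-line computation gives, for $j\ge 1$,
\[
D_j=\frac12\,D_{j-1}+\left[\frac{f(0)+f(2^{1-j})}{2}-f(2^{-j})\right],
\]
hence $2^{j}D_j-2^{j-1}D_{j-1}=2^{j}\!\left[\tfrac{f(0)+f(2^{1-j})}{2}-f(2^{-j})\right]$. Summing over $j=1,\dots,N$ telescopes to $2^{N}D_N=D_0+\sum_{j=1}^{N}2^{j}\!\left[\tfrac{f(0)+f(2^{1-j})}{2}-f(2^{-j})\right]$, and feeding this into $f(-\nu)\ge f(0)+2^{N}\nu D_N$ gives
\[
f(-\nu)\ \ge\ f(0)+\nu\bigl(f(0)-f(1)\bigr)+\sum_{j=1}^{N}2^{j}\nu\left[\frac{f(0)+f(2^{1-j})}{2}-f(2^{-j})\right].
\]
Applying this with $f=K$ and cancelling the first-order term via $K(0)=K(1)$ produces the first inequality; each summand is $\ge 0$ by midpoint convexity, so the estimate tightens as $N$ increases.

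For the range $\nu\le -1$ I would run the mirror argument, extrapolating to the right of $[0,1]$. With $s:=-(1+\nu)\ge 0$ one has $1=\tfrac{\ell}{\ell+s}(1+s)+\tfrac{s}{\ell+s}(1-\ell)$, so convexity gives $f(1+s)\ge f(1)+\tfrac{s}{\ell}\bigl(f(1)-f(1-\ell)\bigr)$; since $1-2^{-j}$ is the midpoint of $1$ and $1-2^{1-j}$, the same telescoping (with $0,2^{-j},2^{1-j}$ replaced by $1,1-2^{-j},1-2^{1-j}$) yields
\[
K(-\nu)=K(1+s)\ \ge\ K(1)-\sum_{j=1}^{N}2^{j}(1+\nu)\left[\frac{K(1)+K(1-2^{1-j})}{2}-K(1-2^{-j})\right],
\]
which is the second inequality after using $K(0)=K(1)$ for the leading term. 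Equivalently, one may obtain it from the first inequality by substituting $\nu\mapsto -(1+\nu)$ and invoking $K(\nu)=K(1-\nu)$, which turns $2^{1-j}$ and $2^{-j}$ into $1-2^{1-j}$ and $1-2^{-j}$. The one point that genuinely relies on the preceding proposition is that $K$ is convex on \emph{all} of $\mathbb{R}$: the arguments $-\nu$ and $1+s$ lie outside $[0,1]$, so convexity on $[0,1]$ alone would not suffice. Beyond that, I expect the only mildly delicate step to be keeping the dyadic weights $2^{j}$ aligned with the correct midpoints in the recursion; everything else is bookkeeping.
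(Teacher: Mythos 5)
Your proof is correct and follows exactly the route the paper intends: the paper's own ``proof'' is a one-line citation of \cite[Theorems 1 and 2]{SabMIA} with $a=0$, $b=1$, and your argument --- the extrapolation inequality $f(-\nu)\ge f(0)+\frac{\nu}{\ell}(f(0)-f(\ell))$ with $\ell=2^{-N}$, the midpoint recursion $D_j=\tfrac12 D_{j-1}+\bigl[\tfrac{f(0)+f(2^{1-j})}{2}-f(2^{-j})\bigr]$ and its telescoped form $2^ND_N=D_0+\sum_{j=1}^N 2^j[\cdots]$, together with convexity of $K$ on all of $\mathbb{R}$ from the preceding proposition and the cancellation $D_0=K(0)-K(1)=0$ --- is a correct self-contained reconstruction of those cited results in this special case. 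The mirror argument for $\nu\le -1$ (or, equivalently, the reduction to the first inequality via $K(\nu)=K(1-\nu)$) also checks out, and your formula correctly reproduces the paper's $N=1$ example $(1+2\nu)\,|||AX-XB|||\le K(-\nu)$.
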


For example, when $N=1$, the first inequality of the above corollary reduces to
$$(1+2\nu) |||AX-XB|||\leq |||A^{1+\nu}XB^{-\nu}-A^{-\nu}XB^{1+\nu}|||.$$

\section{Consequences of Jensen functionals of convex functions for norm inequalities}
Now, we are in a situation to obtain the following results which are the refinements of \eqref{ays1} and \eqref{ays1b}.

\begin{theorem}\label{conde}
Let $1/4\leq \nu\leq 3/4$ and $\alpha \in [1/2, \infty)$. Then
\begin{eqnarray}\label{refks}
F(\nu)&\leq& F(\nu)+2 \lambda_{\min}\left(\frac{F(1/4)+F(1/2)}{2}-F\left(\frac{1/4+1/2}{2}\right)\right) \nonumber \\&\leq&(4r_0-1)F(1/2)+ 2(1-2r_0)G(\alpha)\leq G(\alpha),
\end{eqnarray}
where $r_0=\min\{\nu, 1-\nu\}$ and $\lambda_{\min}=\min\{2-4r_0, 4r_0-1\}$.
\end{theorem}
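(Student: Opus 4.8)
The plan is to regard the chain in \eqref{refks} as three separate inequalities. The first one, $F(\nu)\le F(\nu)+2\lambda_{\min}\bigl(\tfrac{F(1/4)+F(1/2)}{2}-F(3/8)\bigr)$, is trivial: by the convexity of $F$ on $[0,1]$ (\cite[Remark 3.2]{KS1}) the bracket $\frac{F(1/4)+F(1/2)}{2}-F(3/8)$ is nonnegative, and $\lambda_{\min}=\min\{2-4r_0,4r_0-1\}\ge 0$ since $r_0\in[1/4,1/2]$ whenever $\nu\in[1/4,3/4]$. The last inequality is precisely the second estimate of Theorem \ref{t1}: it follows from $F(1/2)\le G(\alpha)$ (apply \eqref{ksreducida} with $\nu=1/2$) together with the identity $(4r_0-1)+2(1-2r_0)=1$ and the nonnegativity of the two coefficients.

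So everything reduces to the middle inequality, and here I would imitate the proof of Theorem \ref{t1} but feed in the sharper normalized Jensen-functional bound of Theorem \ref{Mi} in place of bare convexity. Assume first $1/4\le\nu\le1/2$, so that $r_0=\nu$. Put $f=F$, $x_1=1/2$, $x_2=1/4$ and $\lambda=4\nu-1\in[0,1]$; then $\lambda x_1+(1-\lambda)x_2=\nu$, $\tfrac{x_1+x_2}{2}=3/8$, and $\min\{\lambda,1-\lambda\}=\min\{4\nu-1,2-4\nu\}=\lambda_{\min}$. The lower bound in Theorem \ref{Mi} gives
\[
2\lambda_{\min}\left(\frac{F(1/4)+F(1/2)}{2}-F(3/8)\right)\le (4\nu-1)F(1/2)+(2-4\nu)F(1/4)-F(\nu),
\]
that is,
\[
F(\nu)+2\lambda_{\min}\left(\frac{F(1/4)+F(1/2)}{2}-F(3/8)\right)\le (4\nu-1)F(1/2)+(2-4\nu)F(1/4).
\]
Now $F(1/4)\le G(\alpha)$ by \eqref{ksreducida}, and $2-4\nu\ge 0$ on $[1/4,1/2]$, so the right-hand side is at most $(4\nu-1)F(1/2)+(2-4\nu)G(\alpha)=(4r_0-1)F(1/2)+2(1-2r_0)G(\alpha)$, which is exactly the desired middle term.

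For $1/2\le\nu\le3/4$ I would use the symmetry of $F$ about $\nu=1/2$ (\cite[Remark 3.2]{KS1}): then $1-\nu\in[1/4,1/2]$, $r_0=1-\nu$, $F(\nu)=F(1-\nu)$, and $\lambda_{\min}$ is unchanged under $\nu\mapsto 1-\nu$, so the case already proved applied to $1-\nu$ yields the claim for $\nu$ with the same $r_0$ and $\lambda_{\min}$. I do not expect any analytic obstacle; the only point that genuinely needs care is the elementary bookkeeping — checking that the affine parametrization $\lambda=4\nu-1$ really lands the Jensen midpoint at $3/8$ and produces exactly the weight $\lambda_{\min}$ of the statement, and that the coefficients $4\nu-1$ and $2-4\nu$ used to pass from $F(1/4)$ to $G(\alpha)$ stay nonnegative on $[1/4,1/2]$.
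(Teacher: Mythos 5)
Your proposal is correct and follows essentially the same route as the paper: apply the lower bound of Theorem \ref{Mi} to $F$ at the points $1/4$ and $1/2$ with the affine parametrization landing at $\nu$, then replace $F(1/4)$ by $G(\alpha)$ via \eqref{ksreducida}, and handle $\nu\in[1/2,3/4]$ by the symmetry of $F$ about $1/2$. Your bookkeeping (the choice $\lambda=4\nu-1$ versus the paper's $\lambda=2-4\nu$, and the direct reduction of the second half-interval to the first) is only a cosmetic variation, and all the checks you flag go through.
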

\begin{proof}
First, we consider the case $\nu \in [1/4,1/2]$. Then we choose $\lambda \in [0,1]$ as $\lambda =2-4\nu $, i.e., $\nu=\frac{\lambda}{4} +\frac{1-\lambda}{2}.$ Using \eqref{uljensen} we obtain that
\begin{eqnarray*}
F(\nu)&+&2 \lambda_{\min}\left(\frac{F(1/4)+F(1/2)}{2}-F\left(\frac{3/4}{2}\right)\right)\nonumber \\&\leq&2(1-2\nu)F(1/4)+(4\nu-1)F(1/2),
\end{eqnarray*}
where $\lambda_{\min}=\min\{\lambda, 1-\lambda\}=\min\{2-4\nu, 4\nu-1\}$.
By \eqref{ksreducida}, we have
\begin{eqnarray*}
F(\nu)&+&2 \lambda_{\min}\left(\frac{F(1/4)+F(1/2)}{2}-F\left(\frac{3/4}{2}\right)\right)\nonumber \\&\leq&(4\nu-1)F(1/2)+ 2(1-2\nu)G(\alpha).
\end{eqnarray*}
So,
\begin{eqnarray*}
F(\nu)&+&2 \lambda_{\min}\left(\frac{F(1/4)+F(1/2)}{2}-F\left(\frac{3/4}{2}\right)\right)\nonumber \\&\leq&(4r_0-1)F(1/2)+ 2(1-2r_0)G(\alpha).
\end{eqnarray*}
Similarly, for $\nu \in [1/2,3/4]$ we have
\begin{eqnarray*}
F(\nu)&+&2 \lambda_{\min}\left(\frac{F(3/4)+F(1/2)}{2}-F\left(\frac{5/4}{2}\right)\right)\nonumber \\&\leq&2(2\nu-1)F(3/4)+(3-4\nu)F(1/2),
\end{eqnarray*}
where $\lambda_{\min}=\min\{\lambda, 1-\lambda\}=\min\{4\nu-2, 3-4\nu\}$. Using again \eqref{ksreducida}, we get
\begin{eqnarray*}
F(\nu)&+&2 \lambda_{\min}\left(\frac{F(3/4)+F(1/2)}{2}-F\left(\frac{5/4}{2}\right)\right)\nonumber \\&\leq&(4r_0-1)F(1/2)+ 2(1-2r_0)G(\alpha).
\end{eqnarray*}
As $F$ is symmetric about $\nu=1/2,$ we get the desired result.
\end{proof}

\begin{theorem}\label{integral}Let $1/4\leq \nu\leq 3/4$ and $\alpha \in [1/2, \infty)$. Then
\begin{eqnarray*}
F(1/2)&+&\left(\frac{F(1/4)+F(1/2)}{2}-F\left(\frac{1/4+1/2}{2}\right) \right)+2\left(2\int_{1/4}^{3/4} F(\nu) d\nu - F(1/2)\right) \nonumber \\&\leq&G(\alpha).
\end{eqnarray*}
\end{theorem}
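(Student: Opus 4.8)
The plan is to obtain the asserted (constant) inequality by integrating, over $\nu\in[1/4,3/4]$, the pointwise refinement of \eqref{ksreducida} recorded in Theorem~\ref{conde}, in exactly the same way that \eqref{ays1b} is produced from \eqref{e2} and \eqref{e3} in the proof of Theorem~\ref{t1}, but now retaining the extra Jensen-functional term instead of discarding it.

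First I would treat $\nu\in[1/4,1/2]$. Applying \eqref{uljensen} to the convex function $F$ with $x_1=1/4$, $x_2=1/2$ and $\lambda=2-4\nu$ (so that $\lambda x_1+(1-\lambda)x_2=\nu$ and $\tfrac{x_1+x_2}{2}=\tfrac38$), and then replacing $F(1/4)$ by $G(\alpha)$ via \eqref{ksreducida} as in the proof of Theorem~\ref{conde}, gives
\[
F(\nu)+2\lambda_{\min}\left(\frac{F(1/4)+F(1/2)}{2}-F(3/8)\right)\le (4\nu-1)F(1/2)+(2-4\nu)G(\alpha),
\]
with $\lambda_{\min}=\min\{2-4\nu,\,4\nu-1\}$. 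Integrating this in $\nu$ over $[1/4,1/2]$ and using the elementary evaluations
\[
\int_{1/4}^{1/2}(4\nu-1)\,d\nu=\int_{1/4}^{1/2}(2-4\nu)\,d\nu=\frac18,\qquad \int_{1/4}^{1/2}\min\{2-4\nu,\,4\nu-1\}\,d\nu=\frac1{16}
\]
(the last one obtained by splitting at the kink $\nu=3/8$), one arrives at
\[
\int_{1/4}^{1/2}F(\nu)\,d\nu+\frac18\left(\frac{F(1/4)+F(1/2)}{2}-F(3/8)\right)\le \frac18 F(1/2)+\frac18 G(\alpha).
\]

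Since $F$ is symmetric about $\nu=1/2$, we have $F(3/4)=F(1/4)$ and $F(5/8)=F(3/8)$, so the $\nu\in[1/2,3/4]$ case of Theorem~\ref{conde} yields, by the same computation, the identical bound for $\int_{1/2}^{3/4}F(\nu)\,d\nu$. Adding the two estimates, multiplying by $4$, and rearranging gives
\[
4\int_{1/4}^{3/4}F(\nu)\,d\nu+\left(\frac{F(1/4)+F(1/2)}{2}-F(3/8)\right)\le F(1/2)+G(\alpha),
\]
which, after transferring $F(1/2)$ to the left-hand side and writing $4\int_{1/4}^{3/4}F=2\bigl(2\int_{1/4}^{3/4}F\bigr)$, is precisely the claimed inequality.

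The only genuine work is the routine bookkeeping of the three one-variable integrals, in particular recognizing that $\int_{1/4}^{1/2}\lambda_{\min}\,d\nu$ must be split at $\nu=3/8$; everything else is a term-by-term integration of an inequality already established in Theorem~\ref{conde}. Alternatively, one may integrate \eqref{uljensen} over $[0,1]$ (the bound displayed right after Theorem~\ref{Mi}) applied with $x_1=1/4$, $x_2=1/2$ and combine it with \eqref{ksreducida}, but the route above is cleaner because it reuses Theorem~\ref{conde} verbatim.
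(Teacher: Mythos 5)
Your proof is correct and follows essentially the same route as the paper's: integrating the refined inequality of Theorem~\ref{conde} over $[1/4,1/2]$ (with the same evaluations $\int(4\nu-1)\,d\nu=\int(2-4\nu)\,d\nu=\tfrac18$ and $\int\lambda_{\min}\,d\nu=\tfrac1{16}$, split at $\nu=3/8$), invoking the symmetry of $F$ about $\nu=1/2$ for the other half, adding, and rearranging. No discrepancies to report.
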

\begin{proof}

First, we consider the case $\nu\in [1/4,1/2]$. Integrating inequality \eqref{refks} we obtain that
\begin{eqnarray*}
\int_{1/4}^{1/2} F(\nu)d\nu &+& 2\left(\frac{F(1/4)+F(1/2)}{2}-F\left(\frac{1/4+1/2}{2}\right)\right)\int_{1/4}^{1/2} \lambda_{\min}(\nu) d\nu \nonumber \\
&\leq& F(1/2) \int_{1/4}^{1/2} 4\nu-1 d\nu + G(\alpha)\int_{1/4}^{1/2} 2(1- 2\nu) d\nu\nonumber\\
&\leq& 1/4 G(\alpha),
\end{eqnarray*}
or equivalently,
\begin{eqnarray}\label{int1}
\int_{1/4}^{1/2} F(\nu)d\nu &+& \frac 18 \left(\frac{F(1/4)+F(1/2)}{2}-F\left(\frac38\right)\right)\nonumber \\
&\leq& \frac 18 F(1/2) + \frac 18 G(\alpha)\leq \frac14 G(\alpha).
\end{eqnarray}
Mimicking the same idea in the interval $[1/2,3/4]$ and using the symmetry of $F(\nu),$ we get
\begin{eqnarray}\label{int2}
\int_{1/2}^{3/4} F(\nu)d\nu &+& \frac 18 \left(\frac{F(1/4)+F(1/2)}{2}-F\left(\frac 38\right)\right)\nonumber \\
&\leq& \frac 18 F(1/2) + \frac 18 G(\alpha)\leq \frac14 G(\alpha).
\end{eqnarray}
Adding inequalities \eqref{int1} and \eqref{int2}, we have
\begin{eqnarray*}
\int_{1/4}^{3/4} F(\nu)d\nu &+& \frac 14 \left(\frac{F(1/4)+F(1/2)}{2}-F\left(\frac38\right)\right)\nonumber \\
&\leq& \frac 14 F(1/2) + \frac 14 G(\alpha)\leq \frac12 G(\alpha).
\end{eqnarray*}
Finally, we conclude that
\begin{eqnarray*}
F(1/2)+2\left(2 \int_{1/4}^{3/4} F(\nu)d\nu - F(1/2)\right)+\left(\frac{F(1/4)+F(1/2)}{2}-F\left(\frac38\right)\right) \leq G(\alpha).\nonumber \
\end{eqnarray*}
\end{proof}

Next, we prove the results refining \eqref{ays3} and \eqref{ays3b}.
\begin{theorem}\label{t3}
Let $1/4\leq \nu\leq 3/4$. Then
\begin{eqnarray}\label{y1}
K(\nu)\leq K(\nu)+ 2\lambda_{\min}\left(\frac{1}{2}K(1/4)-K(3/8)\right)\leq 2(1-2r_0)K(1/4),
\end{eqnarray}
where $r_0=\min\{\nu, 1-\nu\}$ and $\lambda_{\min}=\min\{2-4r_0, 4r_0-1\}$.
\end{theorem}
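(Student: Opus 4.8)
The plan is to run the same argument used for Theorem~\ref{conde}, but now invoking the \emph{lower} bound of the normalized Jensen functional from Theorem~\ref{Mi}, applied to the convex function $K$. Recall from \cite[Remark 3.12]{KS1} that $K$ is convex on $[0,1]$ and symmetric about $\nu=1/2$, and note that $K(1/2)=|||A^{1/2}XB^{1/2}-A^{1/2}XB^{1/2}|||=0$. With these facts in hand the leftmost inequality in \eqref{y1} is immediate: since $3/8$ is the midpoint of $1/4$ and $1/2$, convexity gives $K(3/8)\le\frac{K(1/4)+K(1/2)}{2}=\frac12 K(1/4)$, so $\frac12 K(1/4)-K(3/8)\ge 0$, while $\lambda_{\min}\ge 0$ by definition; hence the added term is nonnegative.

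For the second inequality I would first treat $\nu\in[1/4,1/2]$, where $r_0=\nu$. Write $\nu$ as the convex combination $\nu=\lambda\cdot\frac14+(1-\lambda)\cdot\frac12$ with $\lambda=2-4\nu\in[0,1]$; then $\lambda_{\min}=\min\{\lambda,1-\lambda\}=\min\{2-4\nu,4\nu-1\}=\min\{2-4r_0,4r_0-1\}$, which matches the statement. Applying the lower estimate in \eqref{uljensen} to $f=K$, $x_1=1/4$, $x_2=1/2$ gives
$$2\lambda_{\min}\left(\frac{K(1/4)+K(1/2)}{2}-K(3/8)\right)\le \lambda K(1/4)+(1-\lambda)K(1/2)-K(\nu).$$
Using $K(1/2)=0$ and $\lambda K(1/4)=(2-4\nu)K(1/4)=2(1-2r_0)K(1/4)$, this rearranges to exactly
$$K(\nu)+2\lambda_{\min}\left(\frac12 K(1/4)-K(3/8)\right)\le 2(1-2r_0)K(1/4),$$
which is the desired bound on $[1/4,1/2]$.

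Finally, the case $\nu\in[1/2,3/4]$ follows from the symmetry $K(\nu)=K(1-\nu)$ about $\nu=1/2$: applying the previous step with $1-\nu\in[1/4,1/2]$, and noting that $r_0(\nu)=1-\nu$ and that $\lambda_{\min}$ (as a function of $r_0$) is unchanged, yields the same inequality. I do not expect any genuine obstacle; the only care needed is bookkeeping—correctly identifying $\lambda$, $1-\lambda$, $\lambda_{\min}$ and $r_0$ on each subinterval, and using the \emph{lower} bound in \eqref{uljensen} (to turn it into an \emph{upper} bound on $K(\nu)$ after rearrangement).
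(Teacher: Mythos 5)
Your proposal is correct and follows essentially the same route as the paper: apply the lower bound of \eqref{uljensen} to the convex function $K$ with $x_1=1/4$, $x_2=1/2$ and $\lambda=2-4\nu$, use $K(1/2)=0$, and transfer the result to $[1/2,3/4]$ by the symmetry of $K$ about $\nu=1/2$. Your explicit justification of the leftmost inequality (via $K(3/8)\le\frac{1}{2}K(1/4)$ by convexity) is a small added clarification the paper leaves implicit.
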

\begin{proof}
First, we consider the case $\nu \in [1/4,1/2]$. Then we choose $\lambda \in [0,1]$ as $\lambda =2-4\nu $, i.e., $\nu=\frac{\lambda}{4}+\frac{1-\lambda}{2}.$ Using \eqref{uljensen} we obtain that
\begin{eqnarray*}
K(\nu)&+&2 \lambda_{\min}\left(\frac{K(1/4)+K(1/2)}{2}-K\left(\frac{3/4}{2}\right)\right)\nonumber \\&\leq&2(1-2\nu)K(1/4)+(4\nu-1)K(1/2),
\end{eqnarray*}
where $\lambda_{\min}=\min\{\lambda, 1-\lambda\}=\min\{2-4\nu, 4\nu-1\}$.
Since $K(1/2)=0,$ we have
\begin{eqnarray*}
K(\nu)+2 \lambda_{\min}\left(\frac{1}{2}K(1/4)-K(3/8)\right)\leq 2(1-2\nu)K(1/4).
\end{eqnarray*}
So,
\begin{eqnarray*}
K(\nu)+2 \lambda_{\min}\left(\frac{1}{2}K(1/4)-K(3/8)\right)\leq 2(1-2r_0)K(1/4).
\end{eqnarray*}
Similarly, for $\nu \in [1/2,3/4],$ we have
\begin{eqnarray*}
K(\nu)&+&2 \lambda_{\min}\left(\frac{K(3/4)+K(1/2)}{2}-K\left(\frac{5/4}{2}\right)\right)\nonumber \\&\leq&2(2\nu-1)K(3/4)+(3-4\nu)K(1/2),
\end{eqnarray*}
where $\lambda_{\min}=\min\{\lambda, 1-\lambda\}=\min\{4\nu-2, 3-4\nu\}$. Since $K(1/2)=0,$ we get
\begin{eqnarray*}
K(\nu)+2 \lambda_{\min}\left(\frac{1}{2}K(3/4)-K(5/8)\right)\leq 2(1-2r_0)K(3/4).
\end{eqnarray*}
As $K$ is symmetric respect to $\nu=1/2,$ we get the desired result.
\end{proof}

\begin{theorem}\label{t4}
The following inequality holds,
\begin{eqnarray}\label{y2}
\int\limits_{1/4}^{3/4} K(\nu) d\nu \leq \frac{1}{8}K(1/4)+\frac{1}{4}K(3/8).
\end{eqnarray}
\end{theorem}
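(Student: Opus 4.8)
The plan is to integrate the refined pointwise estimate \eqref{y1} of Theorem \ref{t3} over $[1/4,3/4]$. Recall that \eqref{y1} asserts
\[
K(\nu) + 2\lambda_{\min}(\nu)\left(\tfrac{1}{2}K(1/4)-K(3/8)\right)\leq 2\bigl(1-2r_0(\nu)\bigr)K(1/4)
\]
for all $\nu\in[1/4,3/4]$, where $r_0(\nu)=\min\{\nu,1-\nu\}$ and $\lambda_{\min}(\nu)=\min\{2-4r_0(\nu),\,4r_0(\nu)-1\}$. Since $K(1/4)$ and the quantity $\tfrac12 K(1/4)-K(3/8)$ are constants independent of $\nu$, integrating both sides over $[1/4,3/4]$ gives
\[
\int_{1/4}^{3/4}K(\nu)\,d\nu + 2\left(\tfrac{1}{2}K(1/4)-K(3/8)\right)\!\int_{1/4}^{3/4}\!\lambda_{\min}(\nu)\,d\nu \leq 2K(1/4)\!\int_{1/4}^{3/4}\!\bigl(1-2r_0(\nu)\bigr)\,d\nu .
\]

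The next step is to evaluate the two elementary integrals. Using the symmetry of $r_0$ about $\nu=1/2$,
\[
\int_{1/4}^{3/4}\bigl(1-2r_0(\nu)\bigr)\,d\nu = 2\int_{1/4}^{1/2}(1-2\nu)\,d\nu = \tfrac18 .
\]
For $\lambda_{\min}$, on $[1/4,1/2]$ one has $\lambda_{\min}(\nu)=4\nu-1$ for $\nu\in[1/4,3/8]$ and $\lambda_{\min}(\nu)=2-4\nu$ for $\nu\in[3/8,1/2]$, the two linear pieces meeting at $\nu=3/8$; by symmetry the behaviour on $[1/2,3/4]$ mirrors this about $\nu=1/2$. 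Hence
\[
\int_{1/4}^{3/4}\lambda_{\min}(\nu)\,d\nu = 2\left(\int_{1/4}^{3/8}(4\nu-1)\,d\nu + \int_{3/8}^{1/2}(2-4\nu)\,d\nu\right) = 2\left(\tfrac{1}{32}+\tfrac{1}{32}\right)=\tfrac18 .
\]

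Substituting these values into the integrated inequality and simplifying $2\cdot\tfrac18=\tfrac14$, we obtain
\[
\int_{1/4}^{3/4}K(\nu)\,d\nu + \tfrac14\left(\tfrac{1}{2}K(1/4)-K(3/8)\right)\leq \tfrac14 K(1/4),
\]
and rearranging yields exactly $\int_{1/4}^{3/4}K(\nu)\,d\nu \leq \tfrac18 K(1/4)+\tfrac14 K(3/8)$, which is \eqref{y2}. The only point that needs care is the correct bookkeeping of the piecewise-linear functions $r_0$ and $\lambda_{\min}$ when computing the integrals, together with the observation that $\tfrac12 K(1/4)-K(3/8)$ is a constant that factors out of the integral; there is no deeper difficulty. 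Finally, it is worth noting that convexity of $K$ combined with $K(1/2)=0$ gives $K(3/8)\leq \tfrac12 K(1/4)$, so that $\tfrac18 K(1/4)+\tfrac14 K(3/8)\leq \tfrac14 K(1/4)$; thus \eqref{y2} genuinely refines the first inequality of \eqref{ays3b}.
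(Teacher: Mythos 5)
Your proposal is correct and follows essentially the same route as the paper: the paper's proof likewise integrates \eqref{y1} over $[1/4,3/4]$ using the piecewise-linear descriptions of $\lambda_{\min}$ and $r_0$ together with the symmetry of $K$ about $\nu=1/2$. Your computations $\int_{1/4}^{3/4}(1-2r_0)\,d\nu=\int_{1/4}^{3/4}\lambda_{\min}\,d\nu=\tfrac18$ are accurate and simply make explicit the details the paper leaves to the reader.
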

\begin{proof}
By simple calculations we obtain
$$\lambda_{min} = \left\{\begin{array}{c}
4\nu-1\;\;\;\;\;\; for \;\;\;\; 1/4 \leq \nu \leq 3/8 \\
2-4\nu\;\;\;\;\;\; for \;\;\;\; 3/8 \leq \nu \leq 1/2 \\
4\nu-2\;\;\;\;\;\; for \;\;\;\; 1/2 \leq \nu \leq 5/8 \\
3-4\nu\;\;\;\;\;\; for \;\;\;\; 5/8 \leq \nu \leq 3/4 .
\end{array}
\right. $$
Now, taking the sum of integrals of \eqref{y1} with $\lambda_{min}$ as above in the respective intervals and suitable $r_{0},$ keeping in view symmetry of $K(\nu)$ about the line $\nu=\frac{1}{2},$ we obtain the required result.
\end{proof}

\begin{remark}
We claim that inequality \eqref{y2} interpolates \eqref{ays3b}. Indeed,
$$ \int\limits_{1/4}^{3/4} K(\nu) d\nu \leq \frac{1}{8}K(1/4)+\frac{1}{4}K(3/8) \leq \frac{1}{4}K(1/4),$$
noting that $K(3/8)\leq 1/2K(1/4)+1/2K(1/2)$, which follows from convexity of $K(\nu).$
\end{remark}

\begin{remark}
Recently, Bhatia proved in \cite{Bh2} the following inequality in matrix version for the case of the Schatten $2$-norm
\begin{eqnarray*}
\frac12\|A^{\nu}B^{1-\nu}+B^{\nu}A^{1-\nu}\|_2\leq \frac 12 \|A^{\nu}B^{1-\nu}+A^{1-\nu}B^{\nu}\|_2=F_{2, I}(\nu),
\end{eqnarray*}
for $A,B $ positive definite matrices and $\nu\in [1/4, 3/4]$.

Setting
$$F_{2, I}(\nu)=\frac 12 \|A^{\nu}B^{1-\nu}+A^{1-\nu}B^{\nu}\|_2\,.$$
 and combining the last inequality with Theorem \ref{conde}, we get the following statement: if $\nu\in [1/4, 3/4]$ and $\alpha\in [1/2, \infty)$, then
\begin{eqnarray*}
\frac 12 \|A^{\nu}B^{1-\nu}+B^{\nu}A^{1-\nu}\|_2&\leq& F_{2, I}(\nu)\leq F_{2, I}(\nu) + 2 \lambda_{\min}\left(\frac{F_{2,I}(\frac 14)+F_{2,I}(\frac 12)}{2}-F_{2,I}\left(\frac 38\right)\right) \nonumber \\ &\leq&(4r_0-1)F_{2,I}\left(\frac12\right)+ 2(1-2r_0)G_{2,I}(\alpha)\leq G_{2,I}(\alpha),
\end{eqnarray*}
where $r_0=\min\{\nu, 1-\nu\},$ $\lambda_{\min}=\min\{2-4r_0, 4r_0-1\}$ and $G_{2,I}(\alpha)=|| (1-\alpha)A^{\frac 12}B^{\frac 12}+\alpha \left(\frac{A+B}{2}\right)||_2.$ In particular, if $\alpha \in [1/2, 1]$ we obtain
\begin{eqnarray*}
\frac 12 \|A^{\nu}B^{1-\nu}+B^{\nu}A^{1-\nu}\|_2&\leq& F_{2, I}(\nu)\leq F_{2, I}(\nu) + 2 \lambda_{\min}\left(\frac{F_{2,I}(\frac 14)+F_{2,I}(\frac 12)}{2}-F_{2,I}\left(\frac 38\right)\right) \nonumber \\ &\leq&(4r_0-1)F_{2,I}\left(\frac12\right)+ 2(1-2r_0)G_{2,I}(\alpha)\leq G_{2,I}(\alpha)\nonumber\\
&\leq& G_{2,I}(1)=\left\|\frac{A+B}{2}\right\|_2.
\end{eqnarray*}
\end{remark}

\textbf{On Zou's questions
}:
In \cite{Z}, the author presented a matrix inequality related to Heinz and Heron means. More precisely, he obtained a matrix version of inequality \eqref{bhatia} for the Schatten norm. If $\nu \in [0,1], A,B \in \mathcal{M}_n^+$, then it holds
\begin{eqnarray}\label{zou}
\frac 12 \|A^{\nu}XB^{1-\nu}+B^{\nu}XA^{1-\nu}\|_2\leq \left\| (1-\alpha(\nu))A^{\frac 12}XB^{\frac 12}+\alpha(\nu) \left(\frac{AX+XB}{2}\right)\right\|_2,\end{eqnarray}
where $\alpha(\nu)=1-4(\nu-\nu^2).$ In that paper, Zou proposed different conjecture or open questions related to inequality \eqref{zou}.

An inequality weaker than \eqref{zou} is
\begin{eqnarray*}
\frac 12 |||A^{\nu}XB^{1-\nu}+B^{\nu}XA^{1-\nu}||| \leq(1-\alpha(\nu))|||A^{\frac 12}XB^{\frac 12}|||+\alpha(\nu) \left|\left|\left|\frac{AX+XB}{2}\right|\right|\right|,
\end{eqnarray*}
with $A, B \in {\mathcal M}_n^+, X \in \mathcal{M}_n$ and for any unitarily invariant norm $|||.|||.$ Zou conjectured that this inequality is true.

Another possible comparison between the Heinz and Heron means is the following:
\begin{eqnarray*}
H_{\nu}(a,b)\leq F_{r_0}(a,b),
\end{eqnarray*}
where $\nu \in [0,1]$ and $r_0=\min\{\nu, 1-\nu\}$. Zou \cite{Z} posed the following question: Is it true that
$$F(\nu)\leq G(1-2r_0)?$$

According to Zou, to answer this question we have to decide whether the function
$$
f(x)=\frac{\cosh(\beta x)}{1-\beta+\beta \cosh (x)}, \qquad 0\leq \beta \leq 1,
$$
is positive definite.
Here, we give a negative answer to this question. For this we choose,
$ \nu=0.42\;\; {\rm{and}}\;\;x_1=1.7006,\; x_2=0\;\; x_3=0.8047 \;\;\;$
then the matrix
$$ \left(\frac{\cosh((1-2\nu)(x_i-x_j))}{2\nu+(1-2\nu)\cosh(x_i-x_j)}\right)=\left(
\begin{array}{ccc}
1 & 0.8023 & 0.9454 \\
0.8023 & 1 & 0.9560 \\
0.9454 & 0.9560 & 1 \\
\end{array}
\right).$$ The determinant of the above matrix turns out to be $-0.0012.$

\section{Refinements of the Cauchy--Schwarz inequality for matrices}

For $A, B \in \mathbb{B}(\mathscr{H})_+$ and any real number $r>0,$ the  inequality
\begin{eqnarray}
\label{Cauchy}||| \: |A^*B|^r\:|||^2\leq ||| (AA^*)^r||| \cdot ||| (BB^*)^r |||,
\end{eqnarray}
is called the operator Cauchy--Schwarz inequality. Let $A, B $ be positive definite matrices and  $X\in \mathcal{M}_n.$ Then, for every positive real number $r$, we consider the function
$$
\phi(t)=|||\: |A^t X B^{1-t}|^r\:|||\cdot|||\: |A^{1-t} X B^t|^r\:|||
$$
which is convex on $[0,1]$ and attains its minimum at $t=\frac 12.$ As a consequence of this last fact, Hiai and Zhan \cite{HX} obtained the following inequality
\begin{eqnarray}\label{hiaizhan}
||| \: |A^{1/2}XB^{1/2}|^r\:|||^2\leq \phi(t) \leq ||| \:|AX|\:^r||| \cdot ||| \:|XB|\:^r |||,
\end{eqnarray}
which is a refinement of \eqref{Cauchy}.

In this section, we utilize the convexity of $\phi(t)$ and Theorem \ref{Mi} to obtain a refinement of the second inequality in \eqref{hiaizhan}.

\begin{theorem}
Let $A, B \in\mathcal{M}_n^{+}$ and  $X\in \mathcal{M}_n.$ Then for $t \in [0,1]$,
\begin{eqnarray*}
\phi(t)\leq \phi(t)+ \lambda_{\min}\left(\frac{\phi(1/2)+\phi(0)}{2}-\phi\left(\frac{1}{4}\right)\right) \leq(1-2t_0)\phi(0)+ 2t_0\phi(1/2),\nonumber \
\end{eqnarray*}
where $t_0=\min\{t, 1-t\}$ and $\lambda_{\min}=\min\{1-2t_0, 2t_0\}$.
\end{theorem}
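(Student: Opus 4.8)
The plan is to apply Theorem \ref{Mi} (the Dragomir bounds for the normalized Jensen functional) to the convex function $\phi$ on the interval $[0,1]$, exactly as was done for $F(\nu)$ in Theorem \ref{conde} and for $K(\nu)$ in Theorem \ref{t3}. The key point is that $\phi$ is convex on $[0,1]$ and symmetric about $t=1/2$, attaining its minimum there, as recalled from \cite{HX}; combined with the second inequality in \eqref{hiaizhan}, namely $\phi(1/2)\le\phi(0)$ is automatic but more importantly $\phi(t)\le\phi(0)=|||\,|AX|^r|||\cdot|||\,|XB|^r|||$ holds for all $t$ (this is the endpoint value and the upper bound in \eqref{hiaizhan}).

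First I would treat the case $t\in[0,1/2]$. Write $t$ as a convex combination of the two ``anchor'' points $0$ and $1/2$: since $t\in[0,1/2]$ we have $t=\lambda\cdot 0+(1-\lambda)\cdot\tfrac12$ with $\lambda=1-2t\in[0,1]$, so that $\lambda_{\min}=\min\{\lambda,1-\lambda\}=\min\{1-2t,2t\}$. Applying the left-hand inequality of \eqref{uljensen} to $\phi$ with $x_1=0$, $x_2=1/2$ gives
\begin{eqnarray*}
2\lambda_{\min}\left(\frac{\phi(0)+\phi(1/2)}{2}-\phi\left(\frac14\right)\right)\le \lambda\phi(0)+(1-\lambda)\phi(1/2)-\phi(t),
\end{eqnarray*}
which rearranges to
\begin{eqnarray*}
\phi(t)+2\lambda_{\min}\left(\frac{\phi(0)+\phi(1/2)}{2}-\phi\left(\frac14\right)\right)\le (1-2t)\phi(0)+2t\,\phi(1/2).
\end{eqnarray*}
Since $t_0=\min\{t,1-t\}=t$ on this interval, the right side is $(1-2t_0)\phi(0)+2t_0\phi(1/2)$, and the bracketed Jensen term is nonnegative by convexity of $\phi$, so the chain $\phi(t)\le\phi(t)+\lambda_{\min}(\cdots)\le(1-2t_0)\phi(0)+2t_0\phi(1/2)$ follows (after absorbing the factor $2$ into $\lambda_{\min}$ — note the statement has the coefficient $\lambda_{\min}$ rather than $2\lambda_{\min}$, which is consistent once one checks that on $[0,1/2]$ one has $2\lambda_{\min}\le 1$ is not needed; rather the displayed statement simply uses $\lambda_{\min}$, and since the Jensen term is nonnegative the weaker middle bound with coefficient $\lambda_{\min}$ still lies below the coefficient-$2\lambda_{\min}$ bound, hence below the right side).

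For $t\in[1/2,1]$ I would invoke the symmetry $\phi(t)=\phi(1-t)$: replacing $t$ by $1-t\in[0,1/2]$ in the inequality just proved, and noting $t_0=\min\{t,1-t\}=1-t$ is unchanged, gives the identical conclusion. This disposes of the whole interval $[0,1]$. The only genuine subtlety — and the step I expect to require the most care — is bookkeeping the coefficient: matching the $2\lambda_{\min}$ coming out of Theorem \ref{Mi} against the bare $\lambda_{\min}$ written in the statement, and checking that the anchor point $x_1+x_2)/2=(0+1/2)/2=1/4$ is indeed what appears as $\phi(1/4)$ in the displayed inequality; everything else is the routine substitution already rehearsed twice in the paper.
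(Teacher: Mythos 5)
Your proposal is correct and follows essentially the same route as the paper, which simply invokes Theorem \ref{Mi} applied to the convex, symmetric function $\phi$ with anchor points $0$ and $1/2$ (the pattern already rehearsed in Theorems \ref{conde} and \ref{t3}); your handling of the coefficient (the statement's $\lambda_{\min}$ versus the $2\lambda_{\min}$ produced by Theorem \ref{Mi}, reconciled because the Jensen gap is nonnegative) and the symmetry reduction for $t\in[1/2,1]$ are both sound. In fact you supply more detail than the paper's one-line proof does.
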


\begin{proof}
The proof is a consequence of Theorem \ref{Mi}.
\end{proof}


\begin{thebibliography}{99}

\bibitem{AFA} J.M. Aldaz, S. Barza, M. Fujii and M.S. Moslehian, \textit{Advances in Operator Cauchy--Schwarz inequalities and their reverses}, Ann. Funct. Anal. \textbf{6} (2015), no. 3, 275--295.

\bibitem{AYS} I. Ali, H. Yang and A. Shakoor, \textit{Refinements of the Heron and Heinz means inequalities for matrices}, J. Math. Inequal. \textbf{8} (2014), no. 1, 107--112.

\bibitem{Bh} R. Bhatia, \textit{Interpolating the arithmetic--geometric mean inequality and its operator version}, Linear Algebra Appl. \textbf{413} (2006) 355--363.

\bibitem{Bh2} R. Bhatia, \textit{Trace inequalities for products of positive definite matrices}, J. Math. Phys. \textbf{55} (2014), 013509.

\bibitem{bhatiapositive} R. Bhatia, \textit{Positive definite Matrices}, Princeton University Press, Princeton, 2007.

\bibitem{Con} C. Conde, \textit{A version of the Hermite--Hadamard inequality in a nonpositive curvature space}, Banach J. Math. Anal. \textbf{6} (2012), no. 2, 159--167.

\bibitem{Dr} S.S. Dragomir, \textit{Bounds for the normalised Jensen functional}, Bull. Austral. Math. Soc. \textbf{74} (2006), no. 3, 471--478.

\bibitem{HX} F. Hiai and X. Zhan \textit{Inequalities involving unitarily invariant norms and operator monotone functions}, Linear Algebra Appl. \textbf{341} (2002) 151--169.

\bibitem{horn} R. Horn and C. Johnson, \textit{Topics in Matrix Analysis}, Cambridge Univ. Press, 1990.

\bibitem{KS1} Y. Kapil and M. Singh, \textit{ Contractive maps on operator ideals and norm inequalities}, Linear Algebra Appl. \textbf{459} (2014), 475--492.

\bibitem{KMSC} R. Kaur, M.S. Moslehian, M. Singh and C. Conde, \textit{Further refinements of the Heinz inequality}, Linear Algebra Appl. \textbf{447} (2014), 26--37.

\bibitem{KS} R. Kaur and M. Singh, \textit{Complete interpolation of matrix versions of Heron and Heinz means} Math. Inequal. Appl. \textbf{16} (2013), no. 1, 93--99.

\bibitem{GK} I. C. Gohberg and M.G. Krein, \textit{Introduction to the theory of linear nonselfadjoint operators}, Transl. Math. Monogr, 18, Providence, R.I.: Amer. Math. Soc., 1969.

\bibitem{Liao_Wu} W. Liao and J. Wu, \textit{Reverse arithmetic-harmonic mean and mixed mean operator inequalities}, J. Inequal. Appl. \textbf{2015}, 2015:215, 13 pp.

\bibitem{K} F. Kittaneh, \textit{ On the convexity of the Heinz means}, Integral Equations Operator Theory \textbf{68} (2010), 519--527.

\bibitem{MOS} M.S. Moslehian, \textit{Matrix Hermite--Hadamard type inequalities},  Houston J. Math. \textbf{39} (2013), no. 1, 177--189.

\bibitem{SabMIA} M. Sababheh, \textit{Convex functions and means of matrices}, Math. Inequal. Appl. \textbf{20} (2017), no. 1, 29--47.

\bibitem{Zh} X. Zhan, \textit{Inequalities for unitarily invariant norms} SIAM J. Matrix Anal. Appl. \textbf{20} (1998), 466--470.

\bibitem{Z} L. Zou, \textit{Inequalities related to Heinz and Heron means}, J. Math. Inequal. \textbf{7} (2013), no. 3, 389--397.

\end{thebibliography}
\end{document}